\newtheorem{thm}{Theorem}[section]
\newtheorem{cor}[thm]{Corollary}
\newtheorem{lemma}[thm]{Lemma}
\newtheorem{conj}[thm]{Conjecture}
\numberwithin{equation}{section}
\newcommand{\hmax}{h_{\mathrm{max}}}
\newcommand{\lmax}{\ell_{\mathrm{max}}}
\newcommand{\st}{\mathrel{:}}
\let\le=\leqslant
\let\ge=\geqslant
\newcommand{\EO}{\operatorname{EO}}
\newcommand{\class}{\operatorname{\mathcal{C}}}
\let\originalleft\left
\let\originalright\right
\renewcommand{\left}{\mathopen{}\mathclose\bgroup\originalleft}
\renewcommand{\right}{\aftergroup\egroup\originalright}
\newcommand{\abs}[1]{\lvert#1\rvert} \let\card=\abs
\renewcommand{\dfrac}[2]{\lower0.12ex\hbox{\large$\textstyle\frac{#1}{#2}$}}
\newcommand{\Dfrac}[2]{\raise0.05ex\hbox{\small$\displaystyle\frac{#1}{#2}$}}
\newcommand{\calE}{\mathcal{E}}
\newcommand{\calP}{\mathcal{P}}
\newcommand{\Pvec}{\boldsymbol{P}}
\newcommand{\mvec}{\boldsymbol{m}}
\newcommand{\calT}{\mathcal{T}}
\newcommand{\paul}{\widehat\rho}
\newcommand{\eo}{\rho}
\newcommand{\E}{\operatorname{\mathbb{E}}}
\renewcommand{\Pr}{\operatorname{\mathbb{P}}}
\newcommand{\cart}{\mathbin{\raise0.15ex\hbox{$\scriptstyle\square$}}}
\newcommand{\Be}{\operatorname{Be}}
\begin{document}

\title{On Pauling's residual entropy estimate for regular graphs with growing degree}
\author{
Mahdieh Hasheminezhad\thanks{Supported by a MATRIX-Simons travel grant}\\
\small Department of Computer Science\\[-0.5ex]
\small Yazd University \\[-0.5ex]
\small Yazd, Iran\\[-0.5ex]
\small\tt hasheminezhad@yazd.ac.ir
\and
Mikhail Isaev\thanks{Supported by Australian Research Council grant DP250101611} \\
\small School of Mathematics and Statistics \\[-0.5ex]
\small UNSW Sydney \\[-0.5ex]
\small Sydney NSW 2052, Australia \\[-0.5ex]
\small\tt m.isaev@unsw.edu.au
\and
Brendan D. McKay\footnotemark[2] \\
\small School of Computing \\[-0.5ex]
\small Australian National University  \\[-0.5ex]
\small Canberra ACT 2601, Australia \\[-0.5ex]
\small\tt brendan.mckay@anu.edu.au
\and
Rui-Ray Zhang\thanks{Supported by an Alf van der Poorten Travelling Fellowship} \\
\small Brisbane QLD 4109, Australia \\[-0.5ex]
\small\tt rui.ray.zhang@hotmail.com
}
\date{}

\maketitle
\begin{abstract}
  In 1935, Pauling proposed an estimate for the number of
  Eulerian orientations of a graph in the context of the
  theoretical behaviour of water ice.
  The logarithm of the number of Eulerian orientations,
  normalised by the number of vertices, is called the
  residual entropy.
  In an earlier paper, we conjectured that the residual
  entropy of a sequence of regular graphs of increasing
  degree was asymptotically equal to Pauling's estimate.
  Here we prove the conjecture under constraints on
  the number of short circuits. These constraints hold
  under weak eigenvalue conditions and apply to sequences
  of increasing girth and repeated
  Cartesian products such as hypercubes.
\end{abstract}


\section{Introduction}

Let $G$ be a $d$-regular graph on $n$ vertices
with even $d$.
Let $\EO(G)$ denote the number of Eulerian orientations of~$G$,
that is 
orientations of all the edges of $G$ such that every vertex has the same in-degree 
as its out-degree.
We consider the logarithm of the number of Eulerian orientations of $G$ normalised
by the number of vertices
\begin{equation*}
\eo(G) := \dfrac{1}{n} \log \EO(G).
\end{equation*}
The quantity $\eo(G)$ is known as the \textit{residual entropy} of ice-type models in statistical physics.
Determining the asymptotic behaviour of $\eo(G)$ as $n\to\infty$ is a key question in the area, see for example
\cite[Chapter 8]{baxter1969f}
and \cite{lieb1972two}. 
In particular, the value is known for the square lattice~\cite{lieb1967residual},
the triangular lattice~\cite{baxter1969f}, and the hexagonal ice monolayer~\cite{LieHexagonalMonolayer}.

Around 90 years ago, Pauling \cite{pauling1935structure} proposed the best-known heuristic estimate for~$\eo(G)$.
Independently orient each edge of $G$ at random. The probability of the event $\calE_i$ that vertex $i \in [n]$ has in-degree equal to out-degree is
$2^{-d}\binom{d}{d/2}$,
where $[n]$ denotes the set $\{ 1, 2,\ldots, n\}$.
Then
\[
    \EO(G) = 2^{nd/2} \Pr\biggl(\,
    \bigcap_{i\in[n]}\calE_i\biggr).
\]
If all the events $\{ \calE_i: i \in[n]\}$ were independent, then we would get 
 \begin{equation*} 
    \paul(G) 
= \dfrac{1}{n} \log \biggl(2^{nd/2} \prod_{i \in [n]} \Pr(\calE_i)\biggr)
  = \log \binom{d}{d/2} - \Dfrac{d}{2} \log 2. 
  \end{equation*}
The heuristic estimate $\paul(G)$ is known as 
the Pauling's residual entropy estimate \cite{pauling1935structure}.
Lieb and Wu~\cite{lieb1972two}
showed that, for any graph $G$ with even degrees,  \[
    \eo(G) \ge \paul(G).
\]

In this paper, we address the following conjecture.
All asymptotic notation is with respect to a
sequence $\{G(n)\}$ of graphs of increasing order,
and $G(n)$ is assumed to have order~$n$.
\begin{conj}[Special case of {\cite[Conjecture 2.2]{IMZ1}} for regular graphs]\label{Conj}
    If $G=G(n)$ is a sequence of $d$-regular graphs with even $d=d(n)\to\infty$ as $n\to\infty$, then 
    \[
            \eo(G) = \paul(G) +o(1).
    \]
\end{conj}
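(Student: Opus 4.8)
Since the bound $\eo(G)\ge\paul(G)$ of Lieb and Wu is already available, it suffices to prove $\eo(G)\le\paul(G)+o(1)$, and I would carry this out assuming a bound on the number of short circuits of $G$. The starting point is the exact Fourier representation obtained by orienting the edges of $G$ independently and uniformly, writing each indicator $\mathbf 1[\calE_i]$ as $\tfrac1{2\pi}\int_{-\pi}^{\pi}e^{\mathrm{i}\theta_i s_i}\,d\theta_i$ with $s_i$ the in-minus-out degree at vertex $i$, and summing over the $2^{nd/2}$ orientations:
\[
  \EO(G)=\frac{2^{nd/2}}{(2\pi)^n}\int_{[-\pi,\pi]^n}\ \prod_{uv\in E(G)}\cos(\theta_u-\theta_v)\ d\thetavec .
\]
Because $d$ is even, every edge cut of $G$ has an even number of edges, so the integrand is invariant under each translation $\thetavec\mapsto\thetavec+\pi\mathbf 1_S$ ($S\subseteq[n]$); the $2^n$ translates of $(-\tfrac\pi2,\tfrac\pi2]^n$ by these vectors tile the torus $(\mathbb R/2\pi\Integers)^n$, so
\[
  \EO(G)=\frac{2^{nd/2}}{\pi^n}\int_{(-\pi/2,\pi/2]^n}\ \prod_{uv\in E(G)}\cos(\theta_u-\theta_v)\ d\thetavec .
\]
The problem is now the asymptotic evaluation of this integral, and it is worth noting that the factor $2^n$ here — equivalently, the $2^n$ global maxima of the integrand on the torus — is exactly what produces the $-\tfrac d2\log2$ term in $\paul(G)$, so a naive single-saddle estimate would be off by $\log 2$.

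On the box the integrand depends only on the differences $\theta_u-\theta_v$, it equals $1$ identically along the diagonal segment $D=\{t\mathbf 1:t\in(-\tfrac\pi2,\tfrac\pi2]\}$, and near $D$ a Taylor expansion of $\log\cos$ gives
\[
  \prod_{uv\in E(G)}\cos(\theta_u-\theta_v)=\exp\Bigl(-\tfrac12\thetavec^{\mathsf T}L\thetavec+O\Bigl(\sum_{uv\in E(G)}(\theta_u-\theta_v)^4\Bigr)\Bigr),
\]
where $L=dI-A$ is the Laplacian of $G$. A Laplace-type argument should then show the integral is concentrated in a thin tube about $D$ and is there asymptotic to $\pi\sqrt n\,(2\pi)^{(n-1)/2}\bigl(\prod_{j=2}^{n}(d-\mu_j)\bigr)^{-1/2}$, where $d=\mu_1>\mu_2\ge\cdots\ge\mu_n$ are the eigenvalues of $A$: here $\prod_{j\ge2}(d-\mu_j)$ is the Hessian determinant (the product of the nonzero Laplacian eigenvalues) and $\pi\sqrt n$ is the length of $D$ in the flat direction $\mathbf 1$. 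Substituting into the formula for $\EO(G)$, using $\tfrac1n\log(2^{nd/2}/\pi^n)=\tfrac d2\log2-\log\pi$ together with the Stirling estimate $\paul(G)=\tfrac d2\log2-\tfrac12\log(\pi d/2)+o(1)$, the assertion $\eo(G)=\paul(G)+o(1)$ will follow, granted the tube estimate, from the single asymptotic identity
\[
  \frac1n\log\prod_{j=2}^{n}(d-\mu_j)=\log d+o(1),
  \qquad\text{equivalently}\qquad
  \sum_{j=2}^{n}\log\Bigl(1-\frac{\mu_j}{d}\Bigr)=o(n).
\]

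This last identity follows from weak spectral information, which is what a short-circuit hypothesis provides through the trace moments $\tr A^k=\sum_j\mu_j^k$, the number of closed walks of length $k$. Indeed, from $\tr A^2=nd$ alone Markov's inequality bounds the number of $j\ge2$ with $|\mu_j|>d/2$ by $4n/d=o(n)$; on the complementary indices $\lvert\log(1-\mu_j/d)\rvert\le 2|\mu_j|/d$, so their total is at most $\tfrac2d\sum_j|\mu_j|\le\tfrac2d\sqrt{n\,\tr A^2}=2n/\sqrt d=o(n)$; and the at most $4n/d$ remaining indices each contribute $O(\log d)$, hence $o(n)$ in total, provided the corresponding $\mu_j$ are not pathologically close to $d$, a mild ``not nearly disconnected'' condition subsumed by the hypotheses. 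In closed-walk form, which is how a short-circuit bound naturally enters: expanding the logarithm gives $-\sum_{j\ge2}\log(1-\mu_j/d)=\sum_{k\ge1}(\tr A^k-d^k)/(kd^k)$, and one separates the closed walks supported on a tree — at most $n(2\sqrt{d-1})^k$ of them, contributing $o(n)$ since $2\sqrt{d-1}/d\to0$ as $d\to\infty$ — from those that traverse a circuit, whose number is precisely what the hypothesis controls.

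The principal obstacle is the tube estimate of the second paragraph: that the part of the box at distance $\gtrsim\rho$ from $D$ contributes a negligible fraction of the integral. Using $|\cos x|\le\exp(-\tfrac12\operatorname{dist}(x,\pi\Integers)^2)$ this reduces to a quantitative lower bound for $\sum_{uv}\operatorname{dist}(\theta_u-\theta_v,\pi\Integers)^2$ away from $D$; the delicate regions are those where $\theta_u-\theta_v$ is near $\pi$ on many edges — where $\operatorname{dist}(\cdot,\pi\Integers)$ is small but $|\theta_u-\theta_v|$ is not — and these are handled by decomposing according to that edge set and applying an expansion/Poincar\'e-type inequality for $L$ (again supplied by the closed-walk bounds), together with control of the quartic and higher error terms throughout the tube via a cumulant-style expansion. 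Finally I would verify the short-circuit hypothesis in the three advertised settings: it is immediate for sequences of growing girth (circuits of any fixed length are eventually absent); it follows from weak eigenvalue conditions, which bound the $\mu_j$ and hence the closed-walk counts; and for repeated Cartesian products such as the hypercubes $Q_m$ it comes from a direct computation of $\tr A^k$ from the explicit spectrum (eigenvalues $m-2i$ with multiplicities $\binom mi$).
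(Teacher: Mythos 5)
Your proposal takes a genuinely different route from the paper, and it is the route that the paper deliberately avoids. You set up the Fourier-analytic integral $\EO(G)=\dfrac{2^{nd/2}}{\pi^n}\int_{(-\pi/2,\pi/2]^n}\prod_{uv\in E}\cos(\theta_u-\theta_v)\,d\thetavec$ and attempt a Laplace/saddle-point evaluation about the diagonal. That is essentially the machinery of \cite{IMZ}, which the paper cites and explicitly notes only establishes the conjecture under good expansion \emph{and} $d\ge\log^8 n$. The paper instead uses the combinatorial identity $\eo(G)=\paul(G)+\frac1n\log\E 2^{\abs{\calT(\Pvec)}}$, where $\Pvec$ is a uniform random pairing of the half-edges at each vertex and $\calT(\Pvec)$ is the set of induced closed trails. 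The conjecture then reduces to showing $\E 2^{\abs{\calT(\Pvec)}}=e^{o(n)}$, which is attacked by splitting trails into long ones (handled by the exact Poisson–Bernoulli distribution of trails through a vertex and a moment-generating-function bound) and short ones (handled by a switching argument giving a tail bound). This avoids the integral entirely and is why only weak circuit-count hypotheses are needed.

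The genuine gap in your proposal is the ``tube estimate,'' which you yourself flag as the principal obstacle and then defer to ``a cumulant-style expansion'' and a ``Poincar\'e-type inequality for $L$ supplied by the closed-walk bounds.'' This is not a proof, and more importantly it is unlikely to work: controlling the integral outside a thin tube around the diagonal requires quantitative spectral-gap/expansion information (not just trace-moment information), which is exactly why \cite{IMZ} imposes strong expansion hypotheses and a large lower bound on $d$. A bound on $c_\ell(G)$ for $\ell\le\lmax$ with $\lmax=\omega(\log d)$, as in Theorem~\ref{T:main}, does not yield a uniform spectral gap of the kind the tube estimate needs (it bounds how many eigenvalues are large, not how large the largest nontrivial one is). So the hard part of your argument is exactly the part that the hypotheses cannot supply, and this is what the switching argument is designed to bypass. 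The spectral determinant estimate $\frac1n\log\prod_{j\ge2}(d-\mu_j)=\log d+o(1)$ that you carry out is correct but is the easy part; note also that you still need to rule out eigenvalues ``pathologically close to $d$,'' which is a nontrivial connectivity-type assumption not present in Theorem~\ref{T:main}.

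On the corollaries: your sketches of how short-circuit hypotheses follow from growing girth, eigenvalue conditions, or Cartesian-product structure track the paper's Corollaries~\ref{C:spectral}--\ref{C:product} in spirit, though the paper uses Hoeffding's inequality on the independent eigenvalue sum for the product case rather than a direct trace computation.
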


The formula in Isaev, McKay, and Zhang \cite[Theorem 1.1]{IMZ} confirms Conjecture \ref{Conj} for graphs
with good expansion properties and $d\ge \log^8 n$.
Later, in 
\cite[Theorem 2.8]{IMZ1}, they showed that it is also true for random graphs with high probability.

We establish Conjecture~\ref{Conj} for graphs containing not very many short closed trails. In particular, this yields Conjecture~\ref{Conj} for graphs of growing girth and repeated
Cartesian products.
 

\section{Main results}

A \textit{closed trail} in a graph $G$ is a closed walk which uses no edge more than once.
Two closed trails that differ only in the starting point or the direction of traversal are considered the same.
Let $c_{\ell}(G)$ denote the number of closed trails of length $\ell$ in graph $G$. 
Our main result confirms 
Conjecture \ref{Conj} under some weak assumptions on the growth of $c_\ell$ for small $\ell$.

\begin{thm}\label{T:main}
Let  $G = G(n)$ be a regular graph with even degree $d=d(n) \rightarrow \infty$.
Suppose that there is  $\lmax=\omega(\log d)$, 
and some fixed constant $C>0$ such that  
\[
c_\ell(G) \le C e^{-(\ell+1)}d^{\ell-1}n \qquad
\text{for all $3\le \ell\le\lmax$.}
\]
Then Conjecture~\ref{Conj} holds.
\end{thm}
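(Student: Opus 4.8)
\medskip
\noindent\textbf{Proof proposal.}
The plan is to evaluate $\EO(G)$ through its Fourier representation and then perform a Laplace‑type analysis of the resulting integral, using the hypothesis on the circuit counts to control all error terms. Since Lieb and Wu's inequality already gives $\eo(G)\ge\paul(G)$, it suffices to prove the matching upper bound $\eo(G)\le\paul(G)+o(1)$; and because $\log\EO$, $n$ and $\paul$ all behave additively over connected components (each of which is $d$‑regular, has order at least $d+1\to\infty$, and inherits the circuit bound), we may assume $G$ connected. Writing the indicator that vertex $i$ is balanced as a Fourier integral over a circle variable and using independence of the edge orientations yields
\[
\EO(G)=\frac{2^{nd/2}}{(2\pi)^{n}}\int_{[-\pi,\pi]^{n}}\ \prod_{\{i,j\}\in E}\cos(\theta_i-\theta_j)\ d\thetavec .
\]
Here the parity of $d$ is crucial: translating any single $\theta_i$ by $\pi$ multiplies exactly the $d$ cosine factors on edges incident with $i$ by $-1$, hence leaves the integrand unchanged, so the integrand is $\pi$‑periodic in each coordinate and
\[
\EO(G)=\frac{2^{nd/2}}{\pi^{n}}\int_{[-\pi/2,\pi/2]^{n}}\ \prod_{\{i,j\}\in E}\cos(\theta_i-\theta_j)\ d\thetavec .
\]
Folding in this way collapses the $2^{\,n-1}$ critical configurations $\thetavec\in\pi\Integers^{n}$ onto the single diagonal segment $\{t\mathbf 1:|t|\le\pi/2\}$, along which the integrand equals $1$, while the explicit factor $2^{n}$ is exactly what will furnish the term $-\tfrac d2\log 2$ in $\paul(G)$. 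Using $\binom d{d/2}=(1+O(1/d))\,2^{d}\sqrt{2/(\pi d)}$, the theorem is now equivalent to
\[
\int_{[-\pi/2,\pi/2]^{n}}\ \prod_{\{i,j\}\in E}\cos(\theta_i-\theta_j)\ d\thetavec=\Bigl(\tfrac{2\pi}{d}\Bigr)^{n/2}e^{o(n)} .
\]

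\smallskip
\noindent For the Laplace analysis, write $\thetavec=\tfrac{s}{\sqrt n}\mathbf 1+\phivec$ with $\phivec\perp\mathbf 1$; the integrand depends only on $\phivec$. Where $\phivec$ is small, $\log\cos(\theta_i-\theta_j)=-\tfrac12(\phi_i-\phi_j)^{2}+O\bigl((\phi_i-\phi_j)^{4}\bigr)$, so the integrand equals $\exp\!\bigl(-\tfrac12\phivec^{\mathsf T}L\phivec\bigr)$ times a factor governed by the quartic and higher terms, where $L$ is the Laplacian of $G$. Integrating the Gaussian over $\mathbf 1^{\perp}$ (the $s$‑direction contributing only a harmless factor of order $\sqrt n$) gives $(2\pi)^{(n-1)/2}/\sqrt{\Delta}$, where $\Delta:=\prod_{i=2}^{n}(d-\lambda_i)$ and $d=\lambda_1\ge\lambda_2\ge\dots\ge\lambda_n$ are the adjacency eigenvalues. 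Since $(2\pi)^{(n-1)/2}d^{-(n-1)/2}\sqrt n=(2\pi/d)^{n/2}e^{o(n)}$, the displayed identity follows once one establishes: \textbf{(i)} $\Delta=d^{\,n-1}e^{o(n)}$; \textbf{(ii)} the quartic‑and‑higher terms change the Gaussian value by at most a factor $e^{o(n)}$; and \textbf{(iii)} the part of the integral outside a tube around the diagonal, together with the distortion from truncating to $[-\pi/2,\pi/2]^{n}$ in the directions where $L$ has small eigenvalues, is negligible compared with $(2\pi/d)^{n/2}$.

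\smallskip
\noindent All three points are driven by moment estimates on $A$, and this is where the circuit bound enters. For (i), $\log\Delta-(n-1)\log d=\sum_{i\ge2}\log(1-\lambda_i/d)$; the upper bound $\le 1$ is immediate from $\sum_i\lambda_i=0$, and for the lower bound one expands $-\sum_{\ell\ge1}(\ell d^{\ell})^{-1}\bigl(\tr A^{\ell}-d^{\ell}\bigr)$ and splits the sum at $\ell=\lmax$. For $\ell\le\lmax$ a closed walk of length $\ell$ decomposes into a backtracking (``tree‑like'') part, of which there are $O\bigl((4d)^{\ell/2}\bigr)=o(d^{\ell})$ per vertex, and a part that traverses a genuine circuit and so is counted, up to $\mathrm{poly}(\ell)$ and a factor at most $2^{\ell}$ for the bookkeeping of the excursions, in terms of $c_{\ell'}(G)$ for $\ell'\le\ell$; the hypothesis $c_\ell(G)\le Ce^{-(\ell+1)}d^{\ell-1}n$ then makes the $\ell$‑th term $O\!\bigl(\mathrm{poly}(\ell)(2/e)^{\ell}\cdot n/d\bigr)$, which is summable over $\ell$ with total $O(n/d)=o(n)$ — so the $e^{-(\ell+1)}$ decay is precisely what is needed to beat the $2^{\ell}$. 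For $\ell>\lmax$ the bulk of the spectrum ($|\lambda_i|\le 2\sqrt{d-1}$) contributes $O\!\bigl(n\,d^{-\Omega(\lmax)}\bigr)=o(n)$ because $\lmax=\omega(\log d)$, and the $O(n/d)$ eigenvalues close to $\pm d$ are handled separately (for those near $-d$ the sign is favourable, and for those near $d$ one uses that connectivity forces such a near‑disconnecting part to be almost a $d$‑regular induced subgraph, hence to contain at least $d+1$ vertices, which caps their number and keeps their aggregate contribution $O(n\log d/d)=o(n)$). The same closed‑walk/circuit estimates feed the cumulant expansion needed for (ii) — the higher cumulants of the imbalances $\delta_i$ are sums over closed walks, each either tree‑like or carrying a short circuit — and the concentration/truncation estimate needed for (iii).

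\smallskip
\noindent The hardest part will be (ii)–(iii): running the Laplace analysis with \emph{no} spectral‑gap assumption. When $L$ has many small eigenvalues the relevant Gaussian is very spread out, and one must simultaneously show that the genuinely non‑quadratic part of $\sum_{\{i,j\}\in E}\log\cos(\theta_i-\theta_j)$ does not inflate the leading term by more than $e^{o(n)}$ over that large region, and that cutting the integral off at $[-\pi/2,\pi/2]^{n}$ costs only $e^{o(n)}$. It is exactly so that the corresponding error series are summable — geometrically in $\ell$ up to $\lmax$, with a vanishing remainder beyond — that the strength $c_\ell(G)\le Ce^{-(\ell+1)}d^{\ell-1}n$ and the range $\lmax=\omega(\log d)$ are imposed.
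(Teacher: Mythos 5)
Your proposal takes a genuinely different route from the paper. The paper works entirely on the combinatorial side: it uses the identity $\eo(G)=\paul(G)+\tfrac1n\log\E 2^{|\calT(\Pvec)|}$, where $\Pvec$ is a uniformly random Eulerian partition, and proves $\E 2^{|\calT(\Pvec)|}=e^{o(n)}$ by splitting the induced closed trails at a threshold $k\approx\min\{\lmax/2,\log^2 d\}$ on the number of distinct vertices, applying H\"older, bounding the long-trail contribution via a moment generating function for the independent quantities $X_i$ (number of trails through vertex~$i$), and bounding the short-trail contribution via a switching argument that gives a tail bound on $S_k(\Pvec)$. No Fourier integral, no Laplacian spectrum, no Laplace method. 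By contrast you go back to the integral representation of $\EO(G)$ and a saddle-point/cumulant analysis -- essentially the framework of \cite{IMZ}, which the present paper explicitly cites as the approach that works \emph{under expansion hypotheses} and $d\ge\log^8 n$, and which the present paper is designed to replace precisely so as to drop those hypotheses.

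The genuine gap is that the load-bearing parts of your plan are exactly the parts you flag as ``the hardest part'' and leave unproven. Your point (i) (controlling $\log\det(L|_{\mathbf 1^\perp})$) is plausible, though the claim that the bookkeeping of excursions contributes only a $\mathrm{poly}(\ell)\cdot 2^{\ell}$ overcount, and that walks can be cleanly split into a ``tree-like part'' plus ``a genuine circuit'', needs a careful argument (closed walks may traverse several circuits, or one circuit several times, and the standard tangle/encounter combinatorics are more delicate than stated). But (ii) and (iii) -- showing that the quartic-and-higher corrections and the truncation to $[-\pi/2,\pi/2]^n$ cost only $e^{o(n)}$ when $L$ may have many eigenvalues of order $o(d)$ -- are not sketched at all; you simply assert that the hypotheses ``are imposed so that the error series are summable.'' That is the entire difficulty: in \cite{IMZ} those estimates require a spectral gap, and it is not evident (and you give no mechanism) for how bounds on $c_\ell(G)$, which are \emph{average} statements about short circuits, would control the integrand pointwise over a possibly very large near-null region of $L$, or control the mass of the integral near the boundary of the cube. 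The paper's switching argument avoids this obstacle entirely, which is why it succeeds without a spectral gap. As written, your proposal is a plausible research programme, not a proof: the two crucial analytic lemmas are assumed rather than established.
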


We prove Theorem \ref{T:main}  in Section~\ref{S:proof_idea} based on  Lemma \ref{bigtrails} and Lemma \ref{smalltrails} established in the later sections.
We also establish the following three corollaries, whose proof appears
in Section~\ref{S:corollaries}.
By the ``eigenvalues of $G$'' we mean the eigenvalues of the adjacency
matrix of~$G$.

\begin{cor}\label{C:spectral}
  If $G=G(n)$ is a $d$-regular graph with $d\to\infty$, and 
  at most $nd^{-\omega(1)}$ eigenvalues of $G$ lie
  outside $[-d^{1-\delta},d^{1-\delta}]$ for some constant $\delta>0$,
  then Conjecture~\ref{Conj} holds.
\end{cor}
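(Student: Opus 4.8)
The plan is to deduce Corollary~\ref{C:spectral} from Theorem~\ref{T:main} by bounding the number of short closed trails. We may assume $0<\delta\le\tfrac12$, since replacing $[-d^{1-\delta},d^{1-\delta}]$ by a larger interval only weakens the hypothesis. By hypothesis there is a function $\psi=\psi(n)\to\infty$ such that $G$ has at most $nd^{-\psi}$ eigenvalues outside $[-d^{1-\delta},d^{1-\delta}]$; call these the \emph{large} eigenvalues and the rest the \emph{small} eigenvalues. Since $d$ itself is a large eigenvalue, $1\le nd^{-\psi}$, hence $\psi\log d\le\log n$; this is what will reconcile the admissible range of $\ell$ with the requirement $\lmax=\omega(\log d)$. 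The starting point is the elementary bound $c_\ell(G)\le\tr(B^\ell)$, where $B$ is the non-backtracking (Hashimoto) operator on the $nd$ oriented edges of $G$: a closed trail of length $\ell$, once a starting edge and a direction are fixed, is a closed walk that never backtracks (even at the step from its final to its initial edge, since a trail repeats no edge), distinct closed trails give distinct such walks, and $\tr(B^\ell)$ counts exactly these objects with a marked initial oriented edge.

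By the Ihara--Bass formula for $d$-regular graphs, the spectrum of $B$ consists of the two roots $\mu_\pm(\lambda)=\tfrac12\bigl(\lambda\pm\sqrt{\lambda^2-4(d-1)}\,\bigr)$ of $\mu^2-\lambda\mu+(d-1)$ for each adjacency eigenvalue $\lambda$ of $G$, together with $+1$ and $-1$, each of multiplicity $\tfrac12 n(d-2)$. If $|\lambda|\le 2\sqrt{d-1}$ these roots are complex conjugates with $|\mu_\pm(\lambda)|=\sqrt{d-1}$; otherwise they are real with the larger one of absolute value at most $|\lambda|$, so in all cases $|\mu_\pm(\lambda)|\le\max\{|\lambda|,\sqrt{d-1}\}$. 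Splitting $\tr(B^\ell)\le\sum_\mu|\mu|^\ell$ over these three families, each of the at most $nd^{-\psi}$ large eigenvalues contributes at most $2d^\ell$; for the small ones I would use $\max\{|\lambda|,\sqrt{d-1}\}^\ell\le|\lambda|^\ell+(d-1)^{\ell/2}$ together with the second-moment identity $\sum_i\lambda_i^2=\tr(A^2)=nd$, which gives $\sum_{\text{small}}|\lambda|^\ell\le(d^{1-\delta})^{\ell-2}\sum_i\lambda_i^2\le nd^{1+(1-\delta)(\ell-2)}$; and the $\pm1$ eigenvalues contribute at most $n(d-2)$. Altogether
\[
  c_\ell(G)\ \le\ 2nd^{\ell-\psi}+2nd^{(\ell-1)-\delta(\ell-2)}+2n(d-1)^{\ell/2}+nd .
\]

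It remains to check that, for large $n$, each of these four terms is at most $\tfrac14 e^{-(\ell+1)}d^{\ell-1}n$ throughout a range $3\le\ell\le\lmax$ with $\lmax=\omega(\log d)$, for then Theorem~\ref{T:main} applies with $C=1$. Dividing by $nd^{\ell-1}$, the four conditions become $8e^{\ell+1}\le d^{\psi-1}$, $\;8e^{\ell+1}\le d^{\delta(\ell-2)}$, $\;8e^{\ell+1}\le d^{\ell/2-1}$ (using $d-1\le d$), and $4e^{\ell+1}\le d^{\ell-2}$. In each of the last three the right-hand exponent is linear in $\ell$ with slope larger than $1$ once $d$ exceeds a bound depending only on $\delta$, and at $\ell=3$ each inequality just says that $\log d$ exceeds a constant depending only on $\delta$; since $d\to\infty$, all three hold for every $\ell\ge 3$ once $n$ is large. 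The first condition is exactly $\ell\le(\psi-1)\log d-\log 8-1$. Taking, for instance, $\lmax:=\lfloor\sqrt{\psi}\,\log d\rfloor$, we get $\lmax=\omega(\log d)$ and, since $\psi\to\infty$, also $\lmax\le(\psi-1)\log d-\log 8-1$ for large $n$; hence all four conditions hold for $3\le\ell\le\lmax$, and Theorem~\ref{T:main} yields Conjecture~\ref{Conj}.

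The one genuinely delicate step is the treatment of the small eigenvalues: some of them may individually be as large as $d^{1-\delta}$, which is close to $d$ when $\delta$ is small, so a per-eigenvalue bound by itself is hopelessly weak. The identity $\sum_i\lambda_i^2=nd$ is exactly what caps how many eigenvalues can be that large and supplies the decisive saving factor $d^{-\delta(\ell-2)}$; once that is in hand everything reduces to comparing exponentials with powers of $d$, which is straightforward because $d\to\infty$ gives as much room as one needs.
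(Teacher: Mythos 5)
Your proof is correct, and it deviates from the paper's in two separable ways. First, you bound $c_\ell$ by the trace of the non-backtracking operator $B$ via the Ihara--Bass factorisation; the paper simply uses $c_\ell\le\tr(A^\ell)$, the number of closed walks, which is weaker but entirely adequate here. In fact your final bound would be essentially unchanged if you dropped $B$ altogether and wrote $c_\ell\le\sum_i\lvert\lambda_i\rvert^\ell$: the $\pm1$ and $(d-1)^{\ell/2}$ terms you carry along come from $B$ and are dominated anyway, so Ihara--Bass is a harmless but unnecessary detour. Second, and this is the genuine difference, you control the small-eigenvalue contribution with the second-moment identity $\sum_i\lambda_i^2=nd$, which yields $\sum_{\mathrm{small}}\lvert\lambda_i\rvert^\ell\le(d^{1-\delta})^{\ell-2}\cdot nd$ uniformly for all $\ell\ge3$. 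The paper instead bounds each small eigenvalue pointwise by $d^{1-\delta}$, giving $n(d^{1-\delta})^\ell$; this only beats $d^{\ell-1}$ once $\ell\ge 2/\delta$, so the paper splits into two regimes, handling $\ell\le 2/\delta$ with the trivial bound $c_\ell\le nd^{\ell-1}$ and choosing $C=e^{1+2/\delta}$ to absorb the small-$\ell$ cases, whereas your moment trick lets you take $C=1$ with no case split. Both choices of $\lmax$ work: the paper takes $\tfrac12 f(n)\log d$ (with $f=\psi$ in your notation), you take $\lfloor\sqrt{\psi}\log d\rfloor$; either is $\omega(\log d)$ and both are comfortably below the threshold forced by the large-eigenvalue term. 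So the argument is sound and a little cleaner in its uniformity over $\ell$, at the cost of invoking machinery that turns out not to be needed.
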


Las Vergnas~\cite{LasVergnas1990} proved that Conjecture~\ref{Conj}
holds if the girth grows faster than $\log d$.
The following shows that increasing girth is enough.

\begin{cor}\label{C:spectral2}
    If $G=G(n)$ is a sequence of $d$-regular graphs with growing girth
    $g\to\infty$,
    then Conjecture~\ref{Conj} holds.
    (Note that $d\to\infty$ is not required for
    this corollary.)
\end{cor}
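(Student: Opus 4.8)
The goal is to derive Corollary~\ref{C:spectral2} from Theorem~\ref{T:main}, so the plan is to show that a sequence of $d$-regular graphs with girth $g\to\infty$ automatically satisfies the closed-trail hypothesis of Theorem~\ref{T:main}. The starting observation is that in a graph of girth $g$, there are \emph{no} closed trails of length $\ell$ for $3\le\ell<g$, since a shortest closed trail is a cycle. Thus $c_\ell(G)=0$ in that entire range, and the inequality $c_\ell(G)\le C e^{-(\ell+1)}d^{\ell-1}n$ holds trivially there for any constant $C>0$. It therefore remains only to control $c_\ell(G)$ for $g\le\ell\le\lmax$, where we get to \emph{choose} $\lmax$; the natural choice is $\lmax:=g-1$ if $g-1=\omega(\log d)$, but in general we should take $\lmax$ to be the larger of $g-1$ and something like $\lfloor\sqrt{\log d}\,\rfloor$ so that $\lmax=\omega(\log d)$ is guaranteed — wait, that does not work either since $\sqrt{\log d}=o(\log d)$. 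The clean resolution: we are free to pick \emph{any} $\lmax=\omega(\log d)$, so pick $\lmax=\log d\cdot\log\log d$ (say), and we must then bound $c_\ell(G)$ for $g\le\ell\le\lmax$ when $g\le\lmax$, i.e.\ when $g$ does not already exceed our window.

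The core estimate needed is a crude universal upper bound on the number of closed trails of a given length in \emph{any} $d$-regular graph: $c_\ell(G)\le d^{\ell-1}n$, or something of that order. This follows by a standard counting argument — a closed trail of length $\ell$ is determined (up to the factor $2\ell$ accounting for choice of starting vertex and direction) by a sequence of $\ell$ edges forming a closed walk, and there are at most $n\cdot d^{\ell-1}$ such walks since the first vertex has $n$ choices and each subsequent step has at most $d$ choices (the last step is forced). Hence $c_\ell(G)\le \frac{1}{2\ell}nd^{\ell-1}\le nd^{\ell-1}$. Comparing this with the required bound $Ce^{-(\ell+1)}d^{\ell-1}n$, we see the crude bound is off by a factor of roughly $e^{\ell+1}$, so it is \emph{not} directly sufficient. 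The fix is to exploit girth once more: for $g\le\ell\le\lmax$, we sharpen the walk count. I would argue that a closed walk of length $\ell$ that uses no repeated edge, in a graph with girth $g$, is quite constrained — in particular after the first $g-1$ steps the walk has followed a geodesic-like path, and any closed trail of length $\ell<2g$ is in fact a single cycle, of which there are at most $nd^{\ell-1}/(2\ell)$ but more usefully $\le n\cdot\frac{d(d-1)^{\ell-2}}{2\ell}$ ... still not enough by the $e^\ell$ factor. The genuinely clean route is: choose $\lmax$ so that the window $[3,\lmax]$ lies \emph{entirely below} $g$. Since $g\to\infty$, for any function $\lmax(n)$ that grows slowly enough we will eventually have $\lmax<g$; concretely, because $g\to\infty$ we may define $\lmax:=\min\{g-1,\ \lceil\log d\cdot h(d)\rceil\}$ for a sufficiently slowly growing $h\to\infty$, and argue that $\lmax=\omega(\log d)$ — but this fails when $g$ grows slower than $\log d$.

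So the real content must handle $g\le\ell\le\lmax$ with $g$ possibly much smaller than $\log d$, and here I would invoke a standard fact about the number of cycles and closed trails in graphs of large girth. Let me reconsider: actually for $\ell$ in the range $[g,\lmax]$ with $\lmax=\omega(\log d)$ but $g$ small, I claim $c_\ell(G)\le nd^{\ell-1}$ is still the wrong order only because of the $e^{-\ell}$ factor, and this factor is exactly the kind of saving one gets from girth. A closed trail of length $\ell$ in a $d$-regular graph of girth $g$: when traversing it, at each of the first $\ell-1$ steps after leaving a vertex $v$ along an edge, the number of choices for the next edge is $d-1$ (not $d$), because we cannot reuse the edge just traversed. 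Moreover, whenever the trail has length $<g$ since the last return to an already-visited vertex, it cannot close up — forcing the trail to be a union of edge-disjoint closed subwalks each of length $\ge g$. For a \emph{single} closed trail (which is what $c_\ell$ counts — a closed trail need not be a cycle but is connected and edge-simple), the relevant bound is $c_\ell(G)\le \frac{n(d-1)^{\ell-1}}{2\ell}$ at best from naive counting, which is $\ge Ce^{-(\ell+1)}d^{\ell-1}n$ precisely when $(1-1/d)^{\ell-1}\cdot\frac{1}{2\ell}\ge Ce^{-\ell-1}$, i.e.\ for $\ell=o(d)$ this holds once $e^{\ell+1}/(2\ell)\ge C$, which is true for all $\ell\ge 3$ and $C$ a small enough constant but we need it for \emph{some fixed} $C$, and indeed $e^{\ell+1}/(2\ell)\to\infty$, so... wait, the inequality goes the wrong way: we need $c_\ell\le Ce^{-(\ell+1)}d^{\ell-1}n$, an \emph{upper} bound decaying like $e^{-\ell}$, and $\frac{n(d-1)^{\ell-1}}{2\ell}$ does \emph{not} decay like that. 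I therefore need a genuine exponential-in-$\ell$ saving.

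That saving comes from girth: in a $d$-regular graph of girth $g$, a closed trail of length $\ell$ must, informally, ``waste'' roughly a $g$-fraction of its steps, because between consecutive coincidences of vertices it traverses a path of length $\ge g$, and the number of such trails is bounded by $n d^{\,\ell - \ell/g \cdot(g-1)}$-ish $= n d^{\ell/g}$ times lower-order terms — more carefully, the number of closed trails of length $\ell$ is at most $n\cdot d^{\lfloor \ell/g\rfloor + O(1)} \cdot (\text{combinatorial factors})$ when $\ell$ is not much larger than $g$, and when $\ell\ge g$ but $g\to\infty$ we get $d^{\ell/g}=d^{o(\ell)}$, beating $e^{-\ell}d^{\ell-1}$ handily since $d^{o(\ell)}=d^{\ell\cdot o(1)}\ll d^{\ell-1}e^{-\ell}$ once $d\to\infty$. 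The hard part, and the part I would write out carefully, is this structural counting lemma: \emph{in a $d$-regular graph of girth $g$, the number of closed trails of length $\ell$ is at most $n\, d^{\,2\ell/g}\,\ell^{O(\ell/g)}$} (or a similarly shaped bound), proved by decomposing a closed trail into its maximal ``tree-like excursions'' — between any two successive visits to repeated vertices the trail is a simple path of length $\ge g$, so a trail of length $\ell$ has at most $2\ell/g$ branch points, and once the (few) branch points and their order are fixed, the path segments between them are determined by far fewer than $d$ choices per vertex on average. Then one plugs $\ell\le\lmax=\omega(\log d)$ to confirm $n\,d^{2\ell/g}\ell^{O(\ell/g)} \le C e^{-(\ell+1)} d^{\ell-1} n$ for all large $n$, using that $g\to\infty$ makes the exponent $2\ell/g$ negligible compared to $\ell-1$. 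With this lemma in hand, Theorem~\ref{T:main} applies directly with, say, $\lmax:=\lfloor\log d\cdot\min\{g,\log d\}^{1/2}\rfloor$ — any choice that is simultaneously $\omega(\log d)$ and compatible with the lemma's range — and Corollary~\ref{C:spectral2} follows. The main obstacle is getting the structural counting lemma clean enough that the $e^{-\ell}$ decay is actually beaten; everything else is bookkeeping and choosing $\lmax$.
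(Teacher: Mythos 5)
Your proposal has two genuine gaps, and also takes a different route from the paper.

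\textbf{Gap 1: the case $d=O(1)$.} Theorem~\ref{T:main} requires $d\to\infty$, but Corollary~\ref{C:spectral2} does not (the paper explicitly remarks on this). A sequence of $3$-regular graphs with girth $\to\infty$ falls under the corollary but is not covered by your reduction to Theorem~\ref{T:main}. The paper handles this case separately and cheaply: when $d=O(1)$, every closed trail induced by a pairing has length at least $g$, so $\card{\calT(\Pvec)}\le nd/(2g)=o(n)$ deterministically, which already gives $\E 2^{|\calT(\Pvec)|}=e^{o(n)}$ and hence~\eqref{eq:target}. You need some such argument or the corollary is not proved in full.

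\textbf{Gap 2: the key counting lemma is never established.} You correctly identify that the trivial bound $c_\ell\le nd^{\ell-1}$ loses a factor of $e^{\ell}$ compared to the hypothesis of Theorem~\ref{T:main}, and that girth must supply an exponential-in-$\ell$ saving when $g\le\ell\le\lmax$. But your ``structural counting lemma'' --- that a girth-$g$ $d$-regular graph has at most $n\,d^{2\ell/g}\ell^{O(\ell/g)}$ closed trails of length $\ell$ --- is asserted with only a vague sketch (``the path segments between branch points are determined by far fewer than $d$ choices per vertex on average''), and this is precisely the step that needs a real proof. As written it is not a proof; in fact the claim that a closed trail of length $\ell$ has at most $2\ell/g$ branch points needs care (the relevant constraint is that the trail contains a cycle of length $\ge g$ through any repeated vertex, and turning that into the stated bound requires a nontrivial decomposition argument). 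You explicitly flag it as ``the hard part'' and leave it open.

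\textbf{Comparison with the paper's route.} The paper avoids direct trail counting entirely. For $d\to\infty$ it deduces Corollary~\ref{C:spectral2} from Corollary~\ref{C:spectral}, not from Theorem~\ref{T:main} directly: taking an even $\ell<g$ with $\ell\to\infty$, the number of closed walks of length $\ell$ is $n$ times the $\ell$-th moment of the Kesten--McKay distribution on $[-2\sqrt{d-1},2\sqrt{d-1}]$ (because the $\ell$-ball around each vertex is a tree), hence at most $2^\ell(d-1)^{\ell/2}n$. Markov's inequality applied to $\sum_i\lambda_i^\ell$ then shows at most $2^\ell d^{-\ell/6}n=nd^{-\omega(1)}$ eigenvalues exceed $d^{2/3}$ in absolute value, and Corollary~\ref{C:spectral} with $\delta=1/3$ finishes. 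This sidesteps both of your difficulties: the local-tree structure gives the $(d-1)^{\ell/2}$ (rather than $d^{\ell-1}$) bound for free, and the $d=O(1)$ case is handled by the separate deterministic observation above. If you want to salvage a direct-trail-counting route, you would need to actually prove a version of your structural lemma; the spectral detour is considerably shorter.
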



\begin{cor}\label{C:product}
Let $G_t = H^{(t)}_1 \cart \cdots \cart H^{(t)}_t$
be the Cartesian product of non-empty graphs
$\{ H^{(t)}_i: i \in [t]\}$, where $H^{(t)}_i$ is
$h^{(t)}_i$-regular and $d_t=\sum_{i=1}^t h^{(t)}_i$ is even.
Then Conjecture~\ref{Conj} holds as $t\to\infty$ if
\[
   \sum_{i=1}^t (h^{(t)}_i)^2 = O(d_t^{2-\delta})
\]
for some $\delta>0$.
In particular, if $1\le h^{(t)}_i\le\hmax$ for a 
global constant $\hmax$, then Conjecture~\ref{Conj} holds.
Similarly, Conjecture~\ref{Conj} holds if, for
each~$t$, $h^{(t)}_1=\cdots=h^{(t)}_t=h(t)$, where
$h(t)\ge 1$ is bounded by a polynomial in~$t$.
\end{cor}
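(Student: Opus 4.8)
The plan is to verify the two hypotheses of Theorem~\ref{T:main} for the sequence $\{G_t\}$. A few reductions first: a $0$-regular factor contributes nothing to $d_t$ or to $\sum_i(h^{(t)}_i)^2$, and forming the Cartesian product with it merely replaces $G_t$ by a disjoint union of copies of itself, which changes neither $\eo$ nor $\paul$; so we may assume $h^{(t)}_i\ge1$ for every $i$. Then $d_t\ge t$ and $\card{V(G_t)}\ge 2^{t}$, so the order of $G_t$ tends to infinity and we may assume $d_t\to\infty$ (otherwise there is nothing to prove). The hypothesis supplies a constant $K_0$ with $\sum_{i=1}^t(h^{(t)}_i)^2\le K_0\,d_t^{\,2-\delta}$ for all large $t$; we may assume $0<\delta\le1$, and then $M_t:=\max_i h^{(t)}_i$ satisfies $M_t\le\sqrt{K_0}\,d_t^{\,1-\delta/2}$.

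The core estimate is a bound on $c_\ell(G_t)$ valid for $3\le\ell\le\lmax$, where we set $\lmax:=\lceil(\log d_t)^{2}\rceil$. Fix $t$, write $A$ for the adjacency matrix of $G_t$ and $h_i:=h^{(t)}_i$. For $\ell\ge3$ a closed trail of length $\ell$ is a cyclic sequence of $\ell$ \emph{distinct} edges, hence is fixed by no nontrivial element of the dihedral group of order $2\ell$; so it has exactly $2\ell$ representations as a rooted directed closed walk, and since the number of such walks of length $\ell$ is $\tr(A^\ell)$, we get $c_\ell(G_t)\le\frac1{2\ell}\tr(A^\ell)$. Now $A=\sum_{i=1}^tB_i$ with $B_i=I\otimes\cdots\otimes A_i\otimes\cdots\otimes I$ ($A_i$ the adjacency matrix of $H^{(t)}_i$), the $B_i$ commute, and $\tr(A_i^{0})=n_i:=\card{V(H^{(t)}_i)}$, $\tr(A_i)=0$, $0\le\tr(A_i^{k})\le n_ih_i^{\,k-1}$ for $k\ge2$ (a closed walk of length $k$ from a fixed vertex is determined by its first $k-1$ steps). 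Hence only $\mathbf k$ with every $k_i\in\{0\}\cup\{2,3,\dots\}$ contribute to $\tr(A^\ell)=\sum_{\mathbf k}\binom{\ell}{\mathbf k}\prod_i\tr(A_i^{k_i})$. Grouping such $\mathbf k$ by $S:=\{i:k_i\ge2\}$, $\card{S}=s$ (so $2s\le\ell$), factoring out $n:=\prod_i n_i=\card{V(G_t)}$ and $\prod_{i\in S}h_i^{-1}$, and bounding the inner sum $\sum_{k_i\ge2,\ \sum k_i=\ell}\binom{\ell}{(k_i)_{i\in S}}\prod_{i\in S}h_i^{k_i}=\ell!\,[z^\ell]\prod_{i\in S}(e^{h_iz}-1-h_iz)$ by means of $e^{h_iz}-1-h_iz\le\frac12h_i^2z^2e^{h_iz}$ for $z\ge0$, one arrives at
\[
   \tr(A^\ell)\ \le\ n\sum_{s=1}^{\lfloor\ell/2\rfloor}\frac{\ell!}{(\ell-2s)!\,2^s}\sum_{\card{S}=s}\Bigl(\sum_{i\in S}h_i\Bigr)^{\ell-2s}\prod_{i\in S}h_i .
\]

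To estimate the right-hand side I would use $\sum_{i\in S}h_i\le sM_t$ and $\sum_{\card{S}=s}\prod_{i\in S}h_i=e_s(h_1,\dots,h_t)\le d_t^{\,s}/s!$ (immediate from $s!\,e_s\le(\sum_ih_i)^s$), together with $M_t\le\sqrt{K_0}\,d_t^{\,1-\delta/2}$, $\ell!/(\ell-2s)!\le\ell^{2s}$ and $s^{\ell-2s}\le(\ell/2)^{\ell-2s}$; the $s$-th summand is then at most $K_2^{\ell}\ell^{\ell}\,d_t^{\,\ell-1}\,d_t^{\,1-s-\frac\delta2(\ell-2s)}$ for a constant $K_2$, and since $1-s-\frac\delta2(\ell-2s)$ is non-increasing in $s$ for $0<\delta\le1$, it is maximised at $s=1$, where it equals $-\frac\delta2(\ell-2)$. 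Therefore, for all large $t$ and a constant $K_1$,
\[
   c_\ell(G_t)\ \le\ \frac1{2\ell}\tr(A^\ell)\ \le\ (K_1\ell)^{\ell}\,d_t^{\,\ell-1}\,d_t^{-\delta(\ell-2)/2}\,n ,
\]
which is at most $Ce^{-(\ell+1)}d_t^{\,\ell-1}n$ with $C=1$ as soon as $(K_1\ell)^{\ell}e^{\ell+1}\le d_t^{\,\delta(\ell-2)/2}$, i.e.\ $\ell\log\ell+O(\ell)\le\frac\delta2(\ell-2)\log d_t$; for $3\le\ell\le\lmax=\lceil(\log d_t)^2\rceil$ this holds once $t$ is large, because $\log\lmax=O(\log\log d_t)=o(\log d_t)$ while $\lmax=\omega(\log d_t)$ as Theorem~\ref{T:main} requires. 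So both hypotheses hold and Conjecture~\ref{Conj} follows. The two special cases come from choosing $\delta$ well: if $1\le h^{(t)}_i\le\hmax$ then $\sum_i(h^{(t)}_i)^2\le\hmax\,d_t=O(d_t^{\,2-\delta})$ with $\delta=1$; and if $h^{(t)}_1=\cdots=h^{(t)}_t=h(t)$ with $1\le h(t)\le t^{c}$ for large $t$ (a constant $c$), then $\sum_i(h^{(t)}_i)^2=t\,h(t)^2$ while $d_t=t\,h(t)$, so $t\,h(t)^2=O\bigl((t\,h(t))^{2-\delta}\bigr)$ with $\delta=1/(c+1)>0$.

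The step I expect to be the real obstacle is the estimate of $\tr(A^\ell)$, and in particular the choice to keep $\sum_{i\in S}h_i\le sM_t$ rather than the wasteful $\sum_{i\in S}h_i\le d_t$: the latter makes the $s=2$ term of order $\ell^{O(1)}d_t^{\,\ell-2}n$, which already exceeds $e^{-(\ell+1)}d_t^{\,\ell-1}n$ once $\ell$ is of order $\log d_t$, so Theorem~\ref{T:main} could not be applied. Using $M_t=O(d_t^{\,1-\delta/2})$ — which is exactly what $\sum_i(h^{(t)}_i)^2=O(d_t^{\,2-\delta})$ buys — each term in the sum over $s$ instead gains a factor $d_t^{-\Omega(s)-\Omega(\delta(\ell-2s))}$ relative to $d_t^{\,\ell-1}$, which absorbs the $\ell^{\Theta(\ell)}e^{\Theta(\ell)}$ losses throughout the range $\ell\le\lmax=d_t^{o(1)}$. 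A minor but essential point is that the terms with some $k_i=1$ must be dropped using $\tr(A_i)=0$; otherwise the summand with $s=\ell$ alone would be of order $d_t^{\,\ell-1}n$, far too large.
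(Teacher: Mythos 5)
Your proof is correct, but it takes a genuinely different route from the paper's. The paper's proof is two lines: it invokes the fact that the eigenvalues of the Cartesian product are sums of independent eigenvalues of the factors (citing Merris), applies Hoeffding's inequality to get a tail bound $\Pr\bigl(\abs{X(G_t)}\ge d_t^{1-\delta/4}\bigr) = d_t^{-\omega(1)}$ on the spectral measure, and then feeds this into Corollary~\ref{C:spectral}. You instead verify the closed-trail hypothesis of Theorem~\ref{T:main} directly: you expand $\tr(A^\ell)$ using the commuting tensor decomposition $A=\sum_i B_i$, exploit $\tr(A_i)=0$ to force each active factor to absorb at least two steps, group by the support set $S$, and estimate via the generating-function bound $e^{h_i z}-1-h_i z\le \frac12 h_i^2 z^2 e^{h_i z}$ (which is even valid coefficient-wise, so the $[z^\ell]$ extraction is legitimate). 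Both routes ultimately rest on the same structural fact about Cartesian products, and both identify that the hypothesis $\sum_i (h^{(t)}_i)^2 = O(d_t^{2-\delta})$ supplies exactly the concentration needed; the paper's route is shorter and more conceptual, while yours is more elementary and self-contained (no appeal to Hoeffding or to the spectral corollary), at the cost of a substantially longer computation. Your calculation checks out: the key observations are (i) for $\ell\ge3$ the dihedral action on a closed trail's edge sequence is free, giving $c_\ell\le\tr(A^\ell)/(2\ell)$; (ii) the exponent $\ell-s-\tfrac{\delta}{2}(\ell-2s)$ of $d_t$ is indeed maximised at $s=1$ for $\delta\le1$; and (iii) the resulting $\ell^{\Theta(\ell)}$ loss is absorbed by $d_t^{\delta(\ell-2)/2}$ throughout $3\le\ell\le\lceil(\log d_t)^2\rceil$, since $\log\ell = O(\log\log d_t)=o(\log d_t)$. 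The reduction dropping $0$-regular factors and the specialisations to $\delta=1$ and $\delta=1/(\beta+1)$ also match the paper.
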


Corollary \ref{C:product} implies that 
Conjecture \ref{Conj} holds for the $d$-dimensional hypercube $Q_d$ with even~$d$.
Another interesting example is the product of cycles whose length
increases with $t$, which locally converges to a square
lattice of increasing dimension.
See \cite{IMZ1} for dimensions 2 and~3.

\subsection{Proof of Theorem \ref{T:main}}\label{S:proof_idea}

First, we present a convenient formula for the difference between the residual entropy $\rho(G)$ and Pauling's estimate $\paul(G)$. 
A \textit{pairing} of an Eulerian graph $G$ is a division
of the edges incident to each vertex into unordered pairs.
A closed trail $T$ is \textit{induced} by the pairing if each consecutive pair of edges in $T$ are
paired at their common vertex.
An \textit{Eulerian partition} is a partition of the edges into closed trails.
An example is shown in Figure~\ref{fig:pairing}.
Let $\calP(G)$ denote the set of all Eulerian partitions, and note that
\[
   \card{\calP(G)} = \biggl( \frac{d!}{(d/2)!\,2^{d/2}}\biggr)^{\!n},
\]
since each partition is uniquely described by a pairing of the edges at each vertex. For an Eulerian partition $P\in\calP(G)$, let $\calT(P)$ denote the set of closed trails it induces.

\begin{figure}[ht]
    \centering
    \includegraphics[width=0.5\linewidth]{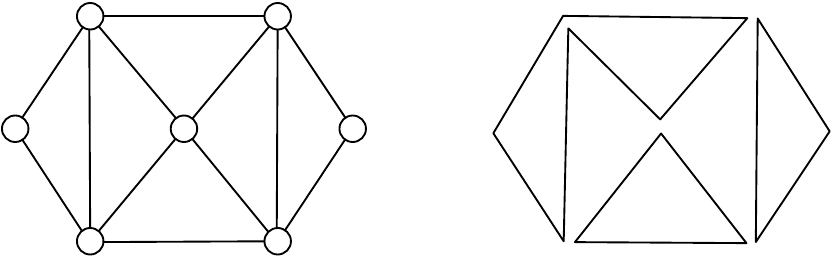}
    \caption{A graph $G$ and an Eulerian partition $P \in \calP(G)$.}
    \label{fig:pairing}
\end{figure}

Let $\Pvec$ be a uniform random element
from $\calP(G)$.
 We have that
\[
\eo(G) = \paul(G) + \Dfrac1n \log \E 2^{|\calT(\Pvec)|};
\]
 see, for example,   \cite[Theorem 5.1]{IMZ1}.
 Thus, Conjecture~\ref{Conj} is equivalent to 
 \[
  \E 2^{|\calT(\Pvec)|} = e^{o(n)}.
 \]

Define 
\begin{align}
\label{eq:kDef}
k = \lfloor \min\{ \lmax/2 , \log^2 d\}\rfloor.    
\end{align}
Let $S_k(\Pvec)$ denote the number of trails from $\calT(\Pvec)$ that contain at most $k$ distinct vertices,
and let $L_k(\Pvec)$ denote the number of other trails from $\calT(\Pvec)$. Using H\"older's inequality, we can bound
\[
   \E 2^{|\calT(\Pvec)|} = \E 2^{L_k(\Pvec) + S_k(\Pvec) }
 \le \bigl(\E 2^{\frac{7}{2}L_k(\Pvec)}\bigr)^{2/7}  \bigl(\E 2^{\frac75 S_k(\Pvec)}\bigr)^{5/7}. 
\]

We will show that  $\E 2^{\frac72 L_k(\Pvec)} = e^{o(n)}$ in Lemma~\ref{bigtrails}
and $\E 2^{\frac75 S_k(\Pvec)} = e^{o(n)}$ 
in Lemma~\ref{smalltrails}.
This will complete the proof of Theorem \ref{T:main}.

\subsection{Proofs of the corollaries}\label{S:corollaries}

\begin{proof}[Proof of Corollary \ref{C:spectral}]
  Suppose the number of eigenvalues outside of $[-d^{1-\delta},d^{1-\delta}]$
  is $nd^{-f(n)}$.
  We will show that the conditions of Theorem~\ref{T:main} hold
  with $C=e^{1+2/\delta}$ and $\lmax=\frac12 f(n)\log d$.

  First consider closed trails of length $\ell\le 2/\delta$.
  Since the final step in a closed trail is determined, we have the
  uniform bound $c_\ell \le n d^{\ell-1}\le Cd^{\ell-1}e^{-(\ell+1)}n$.

  Now consider trails of length $\ell\in[2/\delta,\lmax]$.
  Let $\{\lambda_i\}$ be the eigenvalues of~$G$.
  Since the number of closed trails of length $\ell$ is bounded by the
  number of closed walks of length~$\ell$, we have
  \begin{align*}
   c_\ell \le \sum_{i\in[n]} \lambda_i^\ell
          &= \sum_{i\in[n]:\abs{\lambda_i}\le d^{1-\delta}}
             \lambda_i^\ell
            + \sum_{i\in[n]:\abs{\lambda_i}> d^{1-\delta}}
             \lambda_i^\ell \\
          &\le nd^{\ell-1}\bigl(d^{1-\delta\ell} + d^{1-f(n)}\bigr).
\end{align*}
Since $d\to\infty$, we can assume $d>e^{2/\delta}$.
Note that $Ce^{-(\ell+1)}=e^{2/\delta-\ell}$. We have
\begin{align*}
 d^{1-\delta\ell} &= e^{(1-\delta\ell)\log d} \le e^{2/\delta-2\ell}
                         \le \dfrac12 e^{2/\delta-\ell} \\
 d^{1-f(n)} &= e^{(1-f(n))\log d} \le \dfrac12 e^{2/\delta-(1/2) f(n)\log d}
                          = \dfrac12 e^{2/\delta-\lmax}
                          \le \dfrac12 e^{2/\delta-\ell}.
\end{align*}
This completes the proof.
\end{proof}

\begin{proof}[Proof of Corollary \ref{C:spectral2}]
  If $d=O(1)$, then Conjecture~\ref{Conj} holds since the number of closed
  trails induced by a pairing is at most $nd/(2g)=o(n)$.

  Suppose instead that $d\to\infty$.
  Let $\ell=\ell(n)$ be an increasing even number less than the girth.
  The number of closed walks in $G$ of length $\ell$ is $n$ times the
  $\ell$-th moment of a distribution with support $[-2\sqrt{d-1},2\sqrt{d-1}]$,
  and so is at most $2^\ell(d-1)^{\ell/2}n$, see~\cite{mckayeigens}.
  This allows at most $2^\ell d^{-\ell/6}n=nd^{-\omega(1)}$ eigenvalues with
  absolute value greater than $d^{2/3}$.
  Now apply Corollary~\ref{C:spectral}.
\end{proof}

\begin{proof}[Proof of Corollary \ref{C:product}]
Note that $d_t$ is the degree of $G_t$.
Let $X(H^{(t)}_i)$ be the discrete distribution on
$[-h^{(t)}_i,h^{(t)}_i]$ whose weight at each
$x$ is proportional to the multiplicity of $x$ as an
eigenvalue of $H^{(t)}_i$.
Then $X(G_t)$ is the independent sum
$\sum_{i=1}^t X(H^{(t)}_i)$, see~\cite{Merris1998}.
By Hoeffding's inequality \cite{Hoeffding},
\[
   \Pr\bigl( \abs{X(G_t)} \ge d_t^{1-\delta/4}\bigr) \le
   2\exp\biggl(-\frac{d_t^{2-\delta/2}}
                {2\sum_{i=1}^t (h^{(t)}_i)^2}\biggr)
   = 2\exp\bigl(-\Omega(d_t^{\delta/2})\bigr)
   = d_t^{-\omega(1)}.
\]
This completes the proof of the first part on applying Corollary~\ref{C:spectral}.
For the second part, note that $ \sum_{i=1}^t (h^{(t)}_i)^2=O(t)$ and $d_t^{2-\delta}=\Omega(t^{2-\delta})$.
For the third part, if $h(t)\le \alpha t^\beta$ for constants $\alpha,\beta>0$, take $\delta=(1+\beta)^{-1}$.
\end{proof}

\section{Long closed trails}\label{S:long}

In this section, we bound the moment-generating function for closed trails, induced by a uniform random Eulerian partition $\Pvec \in \calP(G)$,
that have $\omega(\log d)$ distinct vertices. 

For even $m$, let $X(m)$ be a random variable (or distribution) which is
the sum of independent Bernoulli random variables with parameters
$1/(2j-1)$ for $j=1,\ldots,m/2$.

\begin{lemma}\label{Zmprops}
  There is a universal constant $B$ such that,
for even $m\ge 2$ and $\lambda\ge 0$,
 \[
 \E e^{\lambda X(m)}\le (Bm)^{(e^{\lambda}-1)/2}.
 \]
\end{lemma}
\begin{proof}
 For a random variable
  $Z_p$ with Bernoulli distribution $\Be(p)$, we have
  $\E e^{\lambda Z_p}=1+(e^\lambda-1) p\le e^{(e^\lambda-1) p}$.
  Therefore,
  \[
      \E e^{\lambda X(m)} = \prod_{i=1}^{m/2} e^{(e^\lambda-1)/(2i-1)}
       \le (Bm)^{(e^{\lambda}-1)/2},
  \]
  where $B$ is such that $\sum_{i=1}^{m/2} \frac{1}{2i-1} 
  \le \dfrac12\log (Bm)$.
\end{proof}
\begin{lemma}\label{onevertex}
  Consider an Eulerian graph with a vertex $v$ of degree~$m$.
  Then, in a random pairing, the distribution of the number of
  distinct induced closed trails that pass through $v$ is~$X(m)$.
  Moreover, the distribution is independent of the pairings
  at vertices other than~$v$.
\end{lemma}
\begin{proof}
Take an arbitrary (not necessarily random) pairing
  at every vertex other than~$v$.
  This defines a set of $m/2$ trails that start and finish at $v$ without
  intermediate visits to~$v$.
  Now choosing a random pairing at $v$ combines these $m/2$ trails into induced closed trails, in a way combinatorially equivalent to forming the cycles of the composition of two random fixed-point-free involutions on $m$ letters.
  Lugo has shown this distribution to be $X(m)$~\cite[Prop.\ 8.2]{lugo}.
\end{proof}

\begin{lemma}\label{Utrails}
Consider a $d$-regular graph $G$ and let $U$ be a subset of its vertices.
Let $Y(U)$ be the number of closed trails incident with $U$ induced by a random
pairing.
Then, for constant $\lambda\ge 0$, \[
\E e^{\lambda Y(U)} = d^{\,O(|U|)}.
\]
\end{lemma}
\begin{proof}
For convenience assume $U=\{1,\ldots,|U|\}$.
By Lemma~\ref{Zmprops}, there is a constant
$a=a(\lambda)$ such that $\E e^{\lambda X(d)}\le d^{\,a}$.

First choose all the trails that include vertex~1
by randomly pairing just the vertex-edge incidences
used by those trails.
The number of trails has distribution $X(d)$,
so $\E e^{\lambda Y(\{1\})}\le d^a$.
Next, while avoiding the edges already chosen, randomly choose
all the additional trails that use vertex~2.
The number of additional trails has distribution~$X(m)$,
where $m$ is the number of edges incident with vertex~2 that are not in the trails that that include vertex~1.
Although $m$ depends on the earlier choices, 
  Lemma~\ref{onevertex} says that these two trail counts
  are independent when conditioned on~$m$.
  Therefore, since $\E e^{\lambda X(2j)}\le\E e^{\lambda X(d)}$ for $2j\le d$,
  \[
      \E e^{\lambda Y(\{1,2\})}
      \le \sum_{j=0}^{d/2} \,\Pr(m=2j) \E d^{\,a} \E e^{\lambda X(2j)}
      \le \sum_{j=0}^{d/2} \,\Pr(m=2j) d^{\,2a}
      = d^{\,2a}.
  \]
Continuing in the same manner inductively, we have
$\E e^{\lambda Y(U)} \le d^{\,a|U|}$.
\end{proof}

Recall that $L_k(\Pvec)$ is the number of induced closed
trails with more than $k$ distinct vertices
in $\calT(\Pvec)$ where $k$ is defined in~\eqref{eq:kDef}.

\begin{lemma}\label{bigtrails}
  For any fixed $\lambda\ge 0$, and $k\gg\log d$,
  \[
      \E e^{\lambda L_k(\Pvec)} = e^{o(n)}.
  \]
\end{lemma}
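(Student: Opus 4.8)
The plan is to bound $L_k(\Pvec)$ by a simple function of the per-vertex counts $X_i$ and then invoke the exponential moment bound from Lemma~\ref{Xtail}. Concretely, every trail counted by $L_k(\Pvec)$ contains more than $k$ distinct vertices, so if we sum $X_i$ over all $i\in[n]$, each such trail is counted at least $k+1$ times (once for each of its distinct vertices, and in fact also with multiplicity for repeated visits, but we only need the lower bound from distinct vertices). Hence
\[
  L_k(\Pvec) \le \frac{1}{k}\sum_{i\in[n]} X_i = \frac{X}{k},
\]
with $X=X_1+\cdots+X_n$ as in Lemma~\ref{Xtail}. This reduces the problem to controlling $\E\, 2^{\frac{7}{2k} X} = \E\, e^{\lambda X}$ with $\lambda = \frac{7\log 2}{2k}$.

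Next I would check that $\lambda\le 1$ so that Lemma~\ref{Xtail} applies: since $k\gg\log d\to\infty$ (using $d\to\infty$), eventually $k\ge 4 > \tfrac72\log 2$, so $\lambda<1$ for all large~$n$. Then Lemma~\ref{Xtail} gives
\[
  \E\, 2^{\frac72 L_k(\Pvec)} \le \E\, e^{\lambda X}
  \le e^{2\lambda\,\E X}
  = \exp\!\left(\frac{7\log 2}{k}\,\E X\right).
\]
By Lemma~\ref{Xtail}, $\E X = \bigl(\tfrac12\log d + O(1)\bigr)n$, so the exponent is
\[
  \frac{7\log 2}{k}\Bigl(\tfrac12\log d + O(1)\Bigr)n
  = O\!\left(\frac{\log d}{k}\right) n = o(n),
\]
precisely because $k\gg\log d$. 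This yields $\E\, 2^{\frac72 L_k(\Pvec)} = e^{o(n)}$, as required.

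There is essentially no hard obstacle here: the argument is a clean two-step reduction (combinatorial bound $L_k\le X/k$, then the moment-generating-function estimate of Lemma~\ref{Xtail}). The only points needing a little care are (i) justifying the counting inequality --- each distinct vertex of a long trail contributes at least one to the corresponding $X_i$, and distinct trails in the partition $\Pvec$ are edge-disjoint hence the contributions to a fixed $X_i$ from different trails add up without double-counting; and (ii) confirming the range condition $\lambda\in[0,1]$ for the chosen $\lambda$, which follows from $k\to\infty$. One should also note that the constant $\tfrac72$ is not special: any fixed constant in place of $\tfrac72$ would work identically, which is what makes the Hölder split in~\eqref{eq:holder} harmless on this side. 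The $k\gg\log d$ hypothesis is exactly what is needed to kill the $\log d$ coming from $\E X$, so the statement is tight in that sense.
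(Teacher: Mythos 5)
Your proposal is correct and follows essentially the same route as the paper: bound $L_k(\Pvec)\le X/k$ using the fact that each long trail contains more than $k$ distinct vertices, then invoke Lemma~\ref{Xtail} on the moment-generating function of $X$ and use $k\gg\log d$ to get $e^{o(n)}$. The paper's version is more terse (it does not explicitly verify $\lambda\le 1$), but the argument is identical in substance.
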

\begin{proof}
  Since $L_k(\Pvec)$ is non-increasing in $k$, we can
  assume $k=o(n)$.
  We will apply Lemma~\ref{Utrails} for $|U|=\lfloor n/k\rfloor$.
  First consider such a subset $U$ of $[n]$ chosen at random.
  A trail using $k$ or more distinct vertices has probability
  $1-(1-|U|/n)^k$ of being hit by~$U$, which is more than half.
  Therefore for any pairing, there is some $U$ of this size
  such that $L_k\le 2 Y(U)$.
  It follows that
  \[
    \E e^{\lambda L_k} \le \sum_{U:\abs{U}=\lfloor n/k\rfloor} \E e^{2\lambda Y(U)}
      \le \binom{n}{\lfloor n/k\rfloor} d^{\,O(n/k)}
      \le (e d k)^{O(n/k)} = e^{o(n)}. \qedhere
  \]
\end{proof}

\section{Short closed trails}\label{S:short}

In this section, we first introduce a general switching theorem from \cite{Sissy}. Then we use it
to establish a tail probability bound for 
the number of short closed trails in $\calT(\Pvec)$, which will be sufficient to get the desired bound for $\E 2^{\frac75 S_k(\Pvec)}$.

\subsection{The switching theorem}\label{s:switchinglemma}

The following theorem and explanation of its use are taken
from~\cite[Theorem 2.1]{Sissy}, where it was proved using~\cite{switchings}.

\begin{thm}\label{switchings}
Let $\varGamma=\varGamma(V,E)$ be a directed multigraph,
and let $\alpha:E\to \mathbb{R}_+$ be a positive weighting
of the edges of $\varGamma$.  Fix a non-empty finite set $C$, whose elements we will
call \textit{colours}, and let $c:E\to C$ be an edge colouring of $\varGamma$  (which need not be proper).
For all $v\in V$ and $c\in C$ denote by $\varGamma_c^-(v)$ the set of
edges of colour~$c$ entering~$v$, and by $\varGamma_c^+(v)$ the
set of edges of colour $c$ leaving~$v$.

We introduce the set of variables $\{N(v) : v\in V\}\cup\{s(e) : e\in E\}$
and consider the following system
of linear inequalities on these variables:
\begin{equation}\label{ineqs}
\left.
\begin{aligned}
   N(v) \ge 0,& \qquad (v\in V) \\
   s(e) \ge 0,& \qquad (e\in E)  \\
   \sum_{e\in\varGamma_c^-(v)} s(e) \le N(v),&
     \qquad(v\in V,\,\, c\in C) \\
   \sum_{e\in\varGamma_c^+(v)} \alpha(e)s(e) \ge N(v),&
     \qquad(v\in V,\,\, c\in C,\,\, \varGamma_c^+(v)\ne\emptyset). 
\end{aligned}
\quad\right\}
\end{equation}
Let $C(v)=\{c\in C :\varGamma_c^-(v)\ne \emptyset \}$ be the
set of colours entering~$v$.
For each $v\in V$ which is not a source, 
let $\lambda_c(v), c\in C(v)$
be positive numbers such that
$\sum_{c\in C(v)}\lambda_c(v)\le 1$.
For each $vw\in E$, define $\hat\alpha(vw)=\alpha(vw)/\lambda_{c(vw)}(w)$.
Extend this function to 
any directed path $D$ by defining $\hat\alpha(D)=\prod_{e\in D}\hat\alpha(e)$.

Now suppose that $Y,Z\subseteq V$ satisfy the following conditions:
\begin{itemize}[noitemsep]
\item[1.] $Z\ne \emptyset$ and $Y\cap Z=\emptyset$;
\item[2.] If $v\in V$ is a sink of $\varGamma$, or if $\hat\alpha(vw)\ge 1$
   for some $vw\in E$, then $v\in Z$.
\end{itemize}
For any $W,W'\subseteq V$, define $\mathcal D(W,W')$ to be the set of 
non-trivial directed paths in $\varGamma$ which start in $W$, 
end in $W'$, and have no internal vertices in $Y\cup Z$.
Then every solution of~\eqref{ineqs} satisfies
\[
  \sum_{v\in Y} N(v) \le
  \frac{\max_{D\in\mathcal D(Y,Z)}\hat\alpha(D)}{1-\max_{D\in\mathcal D(Y,Y)}\hat\alpha(D)}
  \sum_{v\in Z} N(v),
\]
where the maximum over an empty set is taken to be~0.
\end{thm}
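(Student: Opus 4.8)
The plan is to reduce everything to the special case $V=Y\cup Z$ and to treat that case by a short ``sum and regroup'' argument. Write $M=\sum_{w\in Z}N(w)$ and $\Phi=\sum_{v\in Y}N(v)$, and put $A=\max_{D\in\mathcal D(Y,Z)}\hat\alpha(D)$, $B=\max_{D\in\mathcal D(Y,Y)}\hat\alpha(D)$. Condition~2 (in contrapositive form) says that every vertex of $V\setminus Z$ is not a sink and that every edge leaving $V\setminus Z$ has $\hat\alpha<1$; since every edge of a path in $\mathcal D(Y,Y)$ or $\mathcal D(Y,Z)$ has its tail in $Y$ or at an internal vertex outside $Y\cup Z$, hence outside $Z$, we get $A,B<1$, so the quotient is meaningful, and clearing the denominator shows that the claimed inequality is equivalent to $\Phi\le B\Phi+AM$.

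First I would prove the case $V=Y\cup Z$. Each $v\in Y$ lies outside $Z$, hence is not a sink, so fix a colour $c_v$ with $\varGamma^+_{c_v}(v)\ne\emptyset$; the corresponding outgoing inequality gives $N(v)\le\sum_{e\in\varGamma^+_{c_v}(v)}\alpha(e)s(e)$. Sum this over $v\in Y$, write $\alpha(e)=\hat\alpha(e)\,\lambda_{c(e)}(\mathrm{head}(e))$, and split the resulting sum according to whether $\mathrm{head}(e)\in Y$ or $\mathrm{head}(e)\in Z$: a one-edge directed path from $Y$ to $Y$ (possibly a loop) is a member of $\mathcal D(Y,Y)$ and one from $Y$ to $Z$ is a member of $\mathcal D(Y,Z)$, so on the two parts $\hat\alpha(e)\le B$, resp.\ $\le A$. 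It then remains to bound $\sum_e\lambda_{c(e)}(\mathrm{head}(e))\,s(e)$ over each part by $\Phi$, resp.\ $M$: group the edges by their head $w$ and then by colour $c$, apply the incoming inequality $\sum_{e\in\varGamma^-_c(w)}s(e)\le N(w)$ for each pair $(w,c)$, and finally collapse the sum over the colours entering $w$ using $\sum_{c\in C(w)}\lambda_c(w)\le1$. This yields exactly $\Phi\le B\Phi+AM$.

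For the general case I would remove the vertices of $V\setminus(Y\cup Z)$ one at a time: to eliminate such a vertex $x$, replace each length-two directed path $u\to x\to w$ by a single edge $u\to w$ with the colour of $x\to w$ and a weight chosen so that $\hat\alpha(u\to w)=\hat\alpha(u\to x)\,\hat\alpha(x\to w)$, then delete $x$. One checks that this operation preserves condition~2 and the normalisation $\sum_{c\in C(v)}\lambda_c(v)\le1$, leaves $\Phi$ and $M$ unchanged, and does not increase $A$ or $B$: a path in the new digraph ``un-splices'' to a walk in the old one of the same $\hat\alpha$-weight, and that walk can be pruned to a genuine element of $\mathcal D(Y,Y)$ or $\mathcal D(Y,Z)$ of weight at least as large by deleting closed subwalks, each of which lies in $V\setminus Z$ and so has $\hat\alpha<1$ --- this is where condition~2 does its work. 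Feeding the reduced bound back up gives $\Phi\le B\Phi+AM$ for the original digraph. (Equivalently, one can bypass the reduction and iterate the ``sum and regroup'' step directly along directed paths, absorbing $\sum_Y N$ into the right-hand side whenever the process returns to $Y$ --- this is what produces the geometric factor $\sum_{j\ge0}B^j=(1-B)^{-1}$ --- or to $Z$, giving the factor $A$.)

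The main obstacle is the bookkeeping of the variables $s(e)$ under this reduction (equivalently, in the iteration). When $x$ is spliced out, a new edge inherits its colour from the \emph{second} half $x\to w$, whereas the part of the outgoing and incoming inequalities that it must compensate at the tail $u$ is indexed by the colour of the \emph{first} half $u\to x$, and these colours need not coincide. Routing the switching counts through $x$ so as to remain simultaneously consistent, for every colour, with the incoming inequalities at all heads $w$ and the outgoing inequalities at all tails $u$ is the delicate point, and it is exactly where one invokes the switching machinery of~\cite{switchings}. The pruning step used to control $A$ and $B$ --- showing that only simple paths and simple cycles, i.e.\ the members of $\mathcal D(Y,Y)$ and $\mathcal D(Y,Z)$, ultimately matter, so that the process terminates --- is a secondary technical point, again resting on the hypothesis that $\hat\alpha<1$ on every edge leaving $V\setminus Z$.
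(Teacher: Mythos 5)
The paper does not prove this theorem; it quotes it verbatim from \cite[Theorem 2.1]{Sissy}, where it is derived via the switching framework of \cite{switchings}. So there is no ``paper's own proof'' to compare against, and your attempt must stand on its own.

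Your treatment of the base case $V=Y\cup Z$ is correct: picking a colour $c_v$ with $\varGamma_{c_v}^+(v)\ne\emptyset$ at each $v\in Y$, summing the outgoing inequalities, rewriting $\alpha(e)=\hat\alpha(e)\,\lambda_{c(e)}(\mathrm{head}(e))$, splitting according to the location of the head, and collapsing via $\sum_{c\in C(w)}\lambda_c(w)\le 1$ yields $\Phi\le B\Phi+AM$ in your notation, and your derivation of $A,B<1$ from Condition~2 is also right.

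The reduction to that base case is where the argument has a genuine gap, and you have put your finger on it without repairing it. Splicing out $x\in V\setminus(Y\cup Z)$ produces a new edge $u\to w$ which must simultaneously feed into the outgoing inequality at $u$ for colour $c(u\to x)$ and the incoming inequality at $w$ for colour $c(x\to w)$. Since~\eqref{ineqs} attaches a single colour to each edge, no choice of colour for $u\to w$ accomplishes both, so a solution of the original system does not transfer to a solution of the reduced one; deferring this to ``the switching machinery of \cite{switchings}'' is circular, because that machinery is essentially the theorem being proved. The iteration you offer as an alternative is closer in spirit to the actual proof but, as written, it is only a gesture: the cycle-pruning observation shows that each individual walk weight is dominated by a path weight in $\mathcal D(Y,Y)$ or $\mathcal D(Y,Z)$, but it does not by itself explain why the contributions of arbitrarily long walks through $V\setminus(Y\cup Z)$ collapse into the single geometric factor $(1-B)^{-1}$. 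One would need a potential-function or fixed-point formulation rather than an appeal to ``the process terminates.'' In short, the special case is proved correctly, but the general case is not.
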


We now describe how Theorem~\ref{switchings} can be used for
counting.

Suppose we have a finite set of ``objects'' partitioned into disjoint
classes $\class(v)$, where $v\in V$ for some index set $V$.
Define $N(v)=\card{\class(v)}$ for each $v\in V$. 
Also suppose that for each $c\in C$ we have a relation
$\varPsi_c$ between objects: to be precise, $\varPsi_c$ is
a multiset of ordered pairs $(Q,R)$ of objects.
(We call $\varPsi_c$ a \textit{switching} and usually define it by
some operation that modifies $Q$ to make~$R$.)

Now define an edge-coloured directed multigraph $\varGamma=(V,E)$
with vertex set~$V$, where
$\varGamma$ has a directed edge $vw$ of colour $c$ if and only if
$(Q,R)\in \varPsi_c$ for some $Q\in\class(v)$ and $R\in\class(w)$.
(There is at most one edge of each colour between any pair
of distinct vertices of $\varGamma$.) 
For each $vw\in E$ let
$s'(vw) = \card{\{(Q,R)\in \varPsi_c : Q\in\class(v),R\in\class(w)\}}$,
counting multiplicities, where $c$ is
the colour of~$vw$.

Fix $v\in V$ and $c\in C$ such that $\varGamma_c^+(v)\neq \emptyset$.
Suppose that for any $Q\in\class(v)$ there are at least
$a_c(v)>0$ objects $R$ with $(Q,R)\in \varPsi_c$, counting
multiplicities.  Then
\[
    \sum_{e\in\varGamma_c^+(v)} s'(e)\ge a_c(v)N(v).
\]
Similarly,  for fixed $w\in V$ and $c\in C$, suppose that for every $R\in\class(w)$
there are at most $b_c(w)>0$ objects $Q$ with $(Q,R)\in \varPsi_c$,
counting multiplicities. Then
\[
    \sum_{e\in\varGamma_c^-(w)} s'(e)\le  b_c(w)N(w).
\]
Defining $s(vw)=s'(vw)/b_{c(vw)}(w)$
and $\alpha(vw)=b_{c(vw)}(w)/a_{c(vw)}(v)$ we obtain~\eqref{ineqs}.
Theorem~\ref{switchings} can thus be used to bound the relative values
of $\sum_{v\in Z} \card{\class(v)}$ and $\sum_{v\in Y} \card{\class(v)}$ if
$Y,Z$ satisfy the requirements of the lemma.
Since $\sum_{v\in Z} \card{\class(v)}\le\sum_{v\in V} \card{\class(v)}$,
this also bounds $\sum_{v\in Y} \card{\class(v)}$ relative to
$\sum_{v\in V} \card{\class(v)}$; i.e., it bounds the fraction of
all objects that lie in $\bigcup_{v\in Y} \class(v)$.

\subsection{Switchings for Eulerian partitions}

Let $c_{k,\ell}(G)$ be the number of closed trails of length $\ell$ in $G$
that contain at most $k$ distinct vertices, where $k$ is defined in~\eqref{eq:kDef}.
Let $L = \binom{k}{2}$ denote the maximum 
possible length of such a trail.

\begin{lemma}
\label{lemma:ckl}
  Under the conditions of Theorem~\ref{T:main},
  $c_{k, \ell}(G) \le Ce^{-(\ell+1)}d^{\ell-1}n$
for all $\ell \le L$.
\end{lemma}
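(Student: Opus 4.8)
The plan is to split the range $\ell\le L$ at $\lmax$. For $3\le\ell\le\lmax$ there is nothing to do beyond the hypothesis: a closed trail counted by $c_{k,\ell}(G)$ is in particular a closed trail of length $\ell$, so $c_{k,\ell}(G)\le c_\ell(G)\le Ce^{-(\ell+1)}d^{\ell-1}n$ by the assumption of Theorem~\ref{T:main}; and for $\ell\le 2$ there are no closed trails, so the bound is trivial. The content of the lemma is therefore the range $\lmax<\ell\le L=\binom{k}{2}$ (when $\binom{k}{2}\le\lmax$ this range is empty and the first part already proves the lemma). In this range the hypothesis says nothing, and I would instead bound $c_{k,\ell}(G)$ by a crude enumeration.

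The key observation is that a closed trail of length $\ell$ on at most $k$ vertices is, in particular, a closed walk $v_0v_1\cdots v_\ell=v_0$ of length $\ell$ that visits at most $k$ distinct vertices. Choosing for each trail a canonical walk representative (for instance the lexicographically least one with respect to a fixed ordering of $V(G)$) gives an injection, so $c_{k,\ell}(G)\le W_{k,\ell}$, where $W_{k,\ell}$ is the number of such closed walks. To bound $W_{k,\ell}$, reveal the walk one step at a time and call step $i$ a \emph{discovery} if $v_i\notin\{v_0,\dots,v_{i-1}\}$. Since the walk sees at most $k$ vertices there are at most $k-1$ discovery steps; a discovery step offers at most $d$ choices for $v_i$, while every other step offers at most $k$ choices, since $v_i$ must already have been visited. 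Summing over the start vertex, the set of discovery steps, and then the walk, with $m$ the number of discovery steps, gives
\[
   W_{k,\ell}\le n\sum_{m=0}^{k-1}\binom{\ell}{m}d^{m}k^{\ell-m}.
\]

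It then remains to check that the right-hand side is at most $Ce^{-(\ell+1)}d^{\ell-1}n$. Here is where I would use that $\ell>\lmax\ge 2k$, so $m\le k-1<\ell/2$ and hence $\ell-m>\ell/2$ in every term. Using $\binom{\ell}{m}\le 2^{\ell}$ and dividing the $m$-th term by $Ce^{-(\ell+1)}d^{\ell-1}$ leaves at most $\tfrac{e}{C}(2e)^{\ell}d\,(k/d)^{\ell-m}\le\tfrac{ed}{C}\bigl(4e^{2}k/d\bigr)^{\ell/2}$. Since $d\to\infty$ and $k=O(\log^{2}d)$, we may assume $4e^{2}k/d<d^{-1/2}$, so each of the at most $k$ terms is at most $\tfrac{e}{C}d^{1-\ell/4}$; as $\ell>\lmax=\omega(\log d)$, the full sum $k\cdot\tfrac{e}{C}d^{1-\ell/4}=e^{O(\log d)-\omega(\log^{2}d)}$ is $o(1)$ once $n$ is large. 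This gives $c_{k,\ell}(G)\le W_{k,\ell}\le Ce^{-(\ell+1)}d^{\ell-1}n$ in the remaining range, completing the proof.

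The main point to watch is that the target is smaller than the trivial count $nd^{\ell-1}$ of all closed walks by the factor $Ce^{-(\ell+1)}$, so the enumeration succeeds only because restricting to $k$ vertices forces more than $\ell/2$ of the steps to be ``cheap'' ($\le k$ rather than $\le d$ choices); this is exactly where the inequality $k<\ell/2$, equivalently $\ell>\lmax\ge 2k$, is used. For small $\ell$ --- already for triangles --- the same enumeration loses a factor of order $k$, which is why the range $\ell\le\lmax$ must be handled separately via the hypothesis of Theorem~\ref{T:main}.
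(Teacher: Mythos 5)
Your proof is correct and follows essentially the same route as the paper: split at $\lmax$, invoke the hypothesis of Theorem~\ref{T:main} directly for $\ell\le\lmax$, and for $\lmax<\ell\le L$ bound $c_{k,\ell}$ by a crude enumeration that exploits the fact that $k\le\lmax/2<\ell/2$ forces most steps of the trail to revisit an already-seen vertex (at most $k=O(\log^2 d)$ choices) rather than a fresh neighbour (at most $d$ choices). The paper uses the slightly looser factored bound $c_{k,\ell}\le nd^{k-1}k^{\ell-1}$ where you sum over the set of discovery steps, but the asymptotic estimate and its use of $k\le\ell/2$ and $\ell=\omega(\log d)$ are the same.
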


\begin{proof}
By the assumptions of Theorem \ref{T:main}, it is true for $\ell \le \lmax$. \
When $\ell>\lmax$, we can bound 
\[
    c_{k,\ell}(G) \le n d^{k-1} k^{\ell-1},
\]
where $n d^{k-1}$ is an upper bound for  choosing 
$k$ distinct vertices that belong to a closed trail (with a root vertex) 
and $k^{\ell-1}$ is a bound for the number of choices to make a trail starting from the root vertex. Since $k \le\lmax/2$ and $k \le \log^2 d$ by the definition of $k$ in \eqref{eq:kDef}, 
and our assumption 
$k \le\lmax/2 \le \ell/2$,
we have
\[
    d^{k-1} k^{\ell-1} 
    \le 
    d^{\ell/2-1} (\log^2 d)^{\ell-1} 
    =o\bigl( e^{-(\ell+1)}d^{\ell-1}\bigr).
\]
Thus, we have established the required bound for $\ell>\lmax$.
\end{proof}

Consider an Eulerian partition $P \in \calP(G)$ and a closed trail $T$
induced by~$P$.
A \textit{$T$-switching} on $P$ modifies $P$ at
each vertex visited by~$T$, as follows.
Consider a vertex $v$ that is visited $t\ge 1$ times by~$T$. 
Let $(e_1,e'_1),\ldots,(e_t,e'_t)$ be orderings of
the edge pairs at~$v$ which are traversed by~$T$.
 Choose $t$ additional
 ordered pairs
$(e_{t+1},e'_{t+1}),\ldots,(e_{2t},e'_{2t})$
of edges incident to $v$ 
such that all $\{(e_i,e'_i):
i \in [2t]\}$
are distinct and belong to trails induced by $P$.
Now replace $\{e_1,e'_1\},\ldots,\{e_{2t},e'_{2t}\}$ by
$\{e_1,e_{t+1}\},\ldots,\{e_t,e_{2t}\}$ and
$\{e'_1,e'_{t+1}\},\ldots,\{e'_t,e'_{2t}\}$.
Performing this operation at each vertex visited by $T$
produces a new Eulerian partition~$P'$.
Two $T$-switchings on $P$ are considered the same if the resulting partition~$P'$
is the same; however $T$-switchings for different $T$ are always considered different.

Given $P'$ and $T$, we can recover $P$ using an \textit{inverse $T$-switching}.
As before, consider a vertex $v$ that is visited $t\ge 1$ times by~$T$
and let $\{e_1,e'_1\},\ldots,\{e_t,e'_t\}$ be
the edge pairs at~$v$ which are traversed by~$T$.
If any two of $e_1,\ldots,e_t,e'_1,\ldots,e'_t$ are paired in $P'$,
no inverse $T$-switching can be performed.
Otherwise, suppose $\{e_1,e_{t+1}\},\ldots,\{e_t,e_{2t}\}$ and
$\{e'_1,e'_{t+1}\},\ldots,\{e'_t,e'_{2t}\}$ are pairs in~$P'$.
Replacing them by 
$\{e_1,e'_1\},\ldots,\{e_{2t},e'_{2t}\}$ recovers~$P$.

An illustration of how a $T$-switching acts at one
vertex is shown in Figure~\ref{fig:switching}.
Given $P$ and $T$ induced by $P$, there can be many $T$-switchings but
each produces a different~$P'$.
Conversely, given $P'$ and $T$, there is at most one $P$ and one
$T$-switching on $P$ that produces~$P'$.

If $T$ has length~$\ell$, a $T$-switching is also called an
\textit{$\ell$-switching}.

\begin{figure}[ht]

\centering
\begin{minipage}{.4\textwidth}
\begin{tikzpicture}

\node[circle, draw, inner sep=.8pt, fill] (centre) at (0, 0) {};
\node[circle] (P) at (0, -3.2) {$P$
at vertex $v$};
\node[circle] (v) at (0, .6) {$v$};
\node[inner sep=0] (c0) at (.08, .08) {};
\node[inner sep=0] (c1) at (-.08, .08) {};
\node[inner sep=0] (c2) at (-.16, -.16) {};
\node[inner sep=0] (c4) at (-.08, -.08) {};
\node[inner sep=0] (c3) at (.08, -.08) {};
\foreach \angle in {0,36,...,324}
{
    \node (P\angle) at (\angle:3) {};
}

\node (e1) at (36+9:2) {$e_1$};
\node (e1') at (72-9:2) {$e_1'$};

\node (e2) at (216+9:2) {$e_2$};
\node (e2') at (252-10:2) {$e_2'$};

\node (e3) at (324+9:2) {$e_3$};
\node (e3') at (0-9:2) {$e_3'$};

\node (e4) at (144-9:2) {$e_4$};
\node (e4') at (108+9:2) {$e_4'$};

\draw[line width=.5mm, blue, rounded corners=4pt] (P36) -- (c0.center) -- (P72);

\draw[line width=.5mm, blue, rounded corners=4pt] (P216) -- (c2.center) -- (P252);

\draw[line width=.5mm, red, rounded corners=4pt] (P108) -- (c1.center) -- (P144);

\draw[line width=.5mm, red, rounded corners=4pt] (P324) -- (c3.center) -- (P0);

\draw[densely dotted, line width=.5mm, red, rounded corners=4pt] (P180) -- (c4.center) -- (P288);
\end{tikzpicture}
\end{minipage}
\begin{minipage}{.4\textwidth}
\begin{tikzpicture}
\node[circle, draw, inner sep=.8pt, fill] (centre) at (0, 0) {};
\node[circle] (P) at (0, -3.2) {$P'$
at vertex $v$};
\node[circle] (v) at (0, .6) {$v$};
\node[inner sep=0] (c0) at (.1, 0.15) {};
\node[inner sep=0] (c1) at (0.12, 0) {};

\node[inner sep=0] (c2) at (-.1, .15) {};

\node[inner sep=0] (c3) at (-.2, 0) {};

\node[inner sep=0] (c4) at (-.1, -.2) {};
\foreach \angle in {0,36,...,324}
{
    \node (P\angle) at (\angle:3) {};
}

\node (e1) at (36+9:2) {$e_1$};
\node (e1') at (72-9:2) {$e_1'$};

\node (e2) at (216+9:2) {$e_2$};
\node (e2') at (252-10:2) {$e_2'$};

\node (e3) at (324+9:2) {$e_3$};
\node (e3') at (0-9:2) {$e_3'$};

\node (e4) at (144-9:2) {$e_4$};
\node (e4') at (108+9:2) {$e_4'$};

\draw[line width=.5mm, rounded corners=4pt] (P36) -- (c1.center) -- (P324);

\draw[line width=.5mm, rounded corners=4pt] (P72) -- (c0.center) -- (P0);

\draw[line width=.5mm, rounded corners=4pt] (P108) -- (c2.center) -- (P252);

\draw[line width=.5mm, rounded corners=4pt] (P144) -- (c3.center) -- (P216);

\draw[densely dotted, line width=.5mm, rounded corners=4pt] (P180) -- (c4.center) -- (P288);
\end{tikzpicture}
\end{minipage}
\caption{Example of the action of a switching at one vertex.
Simultaneous similar actions at all vertices visited by the trail $T$ transform
Eulerian partition $P$ into $P'$.
Pairs $(e_1, e_1')$ and $(e_2, e_2')$ are in $T$.
The dotted edge pair in $P$ is not involved in the switching and remains in $P'$.}
\label{fig:switching}
\end{figure}
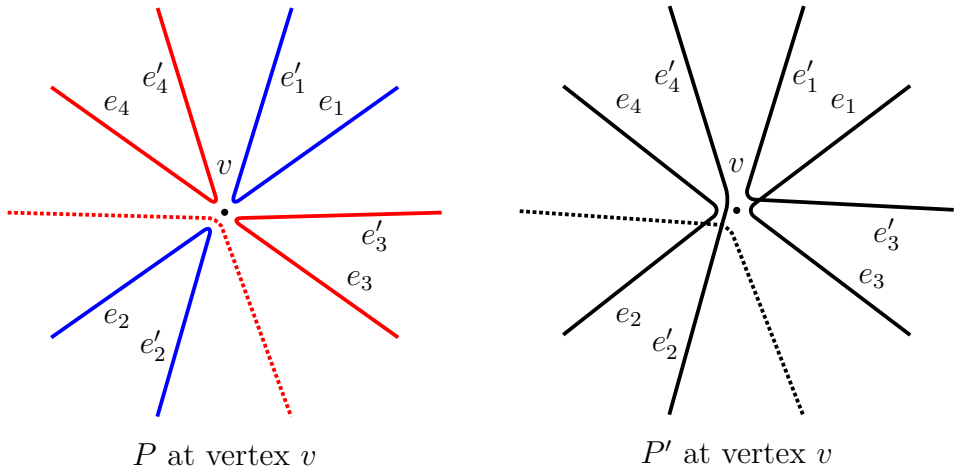

\medskip
For $\mvec=(m_3,\ldots,m_{L})$, let 
 $
   \class(\mvec)
$ 
denote the set of Eulerian partitions $P$ such that, for any  $3\le \ell\le L$, 
$P$ induces exactly $m_\ell$ closed trails of length~$\ell$ and
at most distinct $k$ vertices.
The edge-coloured multidigraph $\varGamma$ has vertices
$\{\mvec\st\class(\mvec)\ne\emptyset\}$
and $L-2$ colours indexed by 
$\{ 3, 4, \ldots, L\}$.
The directed edges of $\varGamma$ with colour $\ell$ are those pairs
$(\mvec,\mvec')$ such that $m_\ell\ge \|\mvec\|_1/L$,
and   
there exists an $\ell$-switching on some $P \in \class(\mvec)$
that yields an Eulerian partition 
$P' \in \class(\mvec')$. Here we used the standard notion of the $1$-norm:
\[
    \|\mvec\|_1 := m_3+\cdots+m_{L}.
\]

\begin{lemma}\label{L:switchcount}
Let $(\mvec, \mvec')$
be an edge of $\varGamma$ with colour $\ell$.
Then, for any 
$P \in \class(\mvec)$,
the number of 
$\ell$-switchings on $P$
is at least $\|\mvec\|_1(d-2L)^{\ell}/L$. Furthermore, for any 
$P' \in \class(\mvec')$,  there are at most $c_{k, \ell}$
$\ell$-switchings that can produce $P'$.
\end{lemma}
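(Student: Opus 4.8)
The plan is to establish the two bounds separately. For the lower bound I would count $T$-switchings one trail $T$ at a time; for the upper bound I would invoke the fact recorded just above the statement, that a pair $(P',T)$ determines at most one pre-image $P$ together with one $T$-switching on it producing~$P'$.

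\emph{Lower bound.} Since $(\mvec,\mvec')$ is an edge of colour $\ell$, the definition of $\varGamma$ forces $m_\ell\ge\|\mvec\|_1/L$, so any $P\in\class(\mvec)$ induces at least $\|\mvec\|_1/L$ closed trails of length $\ell$ with at most $k$ distinct vertices; it therefore suffices to show that each such trail $T$ admits at least $(d-2L)^\ell$ distinct $T$-switchings on $P$. Fix $T$ and, for each vertex $v$, let $t_v$ be the number of times $T$ visits $v$, so $\sum_v t_v=\ell$; since $T$ repeats no edge, the $2t_v$ edges of $T$ at $v$ are distinct, whence $2t_v\le\ell\le L$. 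Among the $T$-switchings on $P$ are those obtained by choosing, independently at each visited $v$, an ordered $t_v$-tuple of pairwise distinct edge-pairs of $P$ at $v$ not traversed by $T$, each with an orientation. Exactly $t_v$ of the $d/2$ edge-pairs of $P$ at $v$ are traversed by $T$, so the number of such choices at $v$ is $2^{t_v}(d/2-t_v)(d/2-t_v-1)\cdots(d/2-2t_v+1)$; distributing the $2^{t_v}$ over the $t_v$ falling-factorial terms, each term becomes at least $2(d/2-2t_v+1)=d-4t_v+2\ge d-2L$, giving at least $(d-2L)^{t_v}$ choices at $v$. Distinct choices yield distinct partitions $P'$ (as recorded in the excerpt), so the number of $T$-switchings on $P$ is at least $\prod_v(d-2L)^{t_v}=(d-2L)^\ell$; multiplying by $m_\ell\ge\|\mvec\|_1/L$ gives the first claim. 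All of this is valid for $d$ large, which is the relevant regime since $2L\le\log^4 d=o(d)$; in particular $d/2-2t_v\ge0$.

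\emph{Upper bound.} Fix $P'\in\class(\mvec')$. Each $\ell$-switching producing $P'$ is a $T$-switching for some closed trail $T$ of length $\ell$ with at most $k$ distinct vertices, and by the quoted fact the pair $(P',T)$ determines $P$ and the switching uniquely, whenever an inverse $T$-switching on $P'$ can be performed at all. Hence the number of $\ell$-switchings producing $P'$ is at most the number of closed trails in $G$ of length $\ell$ with at most $k$ distinct vertices, i.e.\ at most $c_{k,\ell}$.

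\emph{Main difficulty.} The only genuinely delicate point is the per-trail count: one must verify that distinct choices of the auxiliary edge-pairs and their orientations really do give distinct $P'$ — this is what licenses multiplying the per-vertex counts rather than only union-bounding — and that the constant absorbs the $O(L)$ corrections, for which the inequalities $2t_v\le\ell\le L$ and $L=o(d)$ are precisely what is needed. In the upper bound it is worth stating explicitly that $T$ need not be induced by $P'$, since a $T$-switching typically destroys $T$; the bound is thus the purely graph-theoretic quantity $c_{k,\ell}$, independent of $P'$ and of $\mvec'$.
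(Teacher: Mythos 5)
Your proof is correct and follows essentially the same approach as the paper: for the lower bound, a per-vertex count giving at least $(d-2L)^{t_v}$ auxiliary pair choices, multiplied over vertices (using $\sum_v t_v=\ell$) and over the $m_\ell\ge\|\mvec\|_1/L$ eligible trails; for the upper bound, injectivity of the inverse $T$-switching over all closed trails $T$ of length $\ell$ with at most $k$ distinct vertices. Your organisation of the per-vertex count (choose an ordered $t_v$-tuple of unused $P$-pairs and then orient each, giving $2^{t_v}(d/2-t_v)\cdots(d/2-2t_v+1)$) enumerates exactly the same set that the paper enumerates edge-by-edge as $\prod_{j=1}^{t_v}\bigl(d-2t_v-2(j-1)\bigr)$, so the arithmetic coincides term by term.
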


\begin{proof}
For $P \in \class(\mvec)$,
it induces $m_\ell\ge\|\mvec\|_1/L$ closed trails of length $\ell$ because there is an edge leaving $\mvec$  of colour $\ell$.
Suppose that a vertex is visited 
$1\le t \le \ell/2$ times by a closed trail $T$ of length $\ell$ induced by $P$,
then we need to choose a sequence of $t$ additional ordered pairs at this 
vertex and there are $d-2t$ edges to choose from.
To choose the $j$-th new pair for $j \in [t]$,
there are $d-2t-2(j-1)
\ge d-4t \ge d - 2L$ ways.  Overall, we have at least $(d-2L)^t$ choices for this vertex. Taking the product over all vertices of $T$, we conclude that the number of 
$\ell$-switchings on pairings in~$\class(\mvec)$
is at least
$m_\ell (d-2L)^{\ell}
\ge
\|\mvec\|_1(d-2L)^{\ell}/L$.

To establish the second part, from the description of an inverse $T$-switching
we know that for each $P'\in\class(\mvec')$ and each closed trail $T$ in $G$,
there is at most one partition $P$ and one $T$-switching on $P$ that
gives~$P'$.
Thus the number of $\ell$-switchings whose result is $P'$ is
at most~$c_{k,\ell}$.
\end{proof}

Let $\class_m$ be the set of $\mvec\in V(\varGamma)$ that correspond to Eulerian partitions that induce
exactly $m$ closed trails with at most $k$ distinct vertices of $G$.
Let
$Y=\bigcup_{m > M}\class_m$
and 
$Z=\bigcup_{m\le M_0}\class_m$, where 
\begin{align}\label{eq:M0def}
    M>M_0:=2CnL^2/d,
\end{align}
and $C$ is the constant from Theorem~\ref{T:main}.

For any $m>M_0$, every vertex in $\class_m$ has at 
least one edge leaving, by the pigeonhole principle,  therefore all sinks are in $Z$.
We define $N(\mvec) = |\class(\mvec)|$ and $\alpha(e)$, and
$s(e)$ for all $e\in E(\varGamma)$
as described in Section \ref{s:switchinglemma}
after Theorem \ref{switchings}.
Since $L = \binom{k}{2} \le \log^4d$
by the definition of $k$
in \eqref{eq:kDef},
Lemma \ref{L:switchcount} gives
\[
    \alpha(e) \le \frac{c_{k, \ell}L}{m(d-2L)^\ell}
     = \frac{c_{k, \ell}L}{m d^{\ell}} e^{O(L^2/d)}
     \le \frac{2c_{k, \ell}L}{m d^{\ell}},
\]
for any edge $e$ with colour $\ell$ leaving a vertex in $\class_m$. 
Choosing $\lambda_\ell(\mvec)=L^{-1}$ for each vertex $\mvec$ of $\varGamma$ and all $3\le\ell\le L$,
we also get that 
\begin{equation}\label{eq:alphahat}
    \hat\alpha(e)\le \frac{2c_{k, \ell}L^2}{d^{\ell}m}.
\end{equation}

Next,  by applying Theorem~\ref{switchings}, we can bound the number of Eulerian partitions containing too many short trails.
\begin{lemma}\label{L:YZ}
  Under the conditions of Theorem~\ref{T:main},
  $Z\ne\emptyset$ and
  \[
  \abs{Y}   \le  2e^{-M+M_0}  \abs{Z}.
  \]
\end{lemma}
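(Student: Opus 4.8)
The plan is to apply Theorem~\ref{switchings} with the objects being the Eulerian partitions, $N(\mvec)=\abs{\class(\mvec)}$, and the $\ell$-switchings playing the role of the colour-$\ell$ relations, exactly as arranged above; then $\sum_{\mvec\in Y}N(\mvec)$ and $\sum_{\mvec\in Z}N(\mvec)$ are the quantities the statement calls $\abs Y$ and $\abs Z$. The first step is to record the bound on $\hat\alpha$ in a clean form: combining \eqref{eq:alphahat}, Lemma~\ref{lemma:ckl}, and $M_0=2CnL^2/d$ gives
\[
  \hat\alpha(e)\le \frac{M_0\, e^{-(\ell+1)}}{m}\qquad\text{for every edge $e$ of colour $\ell$ leaving a vertex in $\class_m$.}
\]
Since $e^{-(\ell+1)}<1$, this verifies the remaining hypotheses of Theorem~\ref{switchings}: an edge with $\hat\alpha(e)\ge1$ must leave a vertex with $m\le M_0$, hence in $Z$; all sinks lie in $Z$, as noted above; $Y\cap Z=\emptyset$ because $M>M_0$; and $Z\ne\emptyset$ follows from a routine first-moment estimate, since the probability that $\Pvec$ induces a prescribed short trail of length $\ell$ is at most $(d-L)^{-\ell}=(1+o(1))d^{-\ell}$, so $\E S_k(\Pvec)\le\sum_{\ell=3}^{L}c_{k,\ell}(1+o(1))d^{-\ell}=O(n/d)$, which is $o(M_0)$ as $L\to\infty$; hence some partition has fewer than $M_0$ short trails.

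The heart of the argument will be a bound on how much one switching can change $\|\mvec\|_1$: for any edge $(\mvec,\mvec')$ of $\varGamma$ of colour $\ell$,
\[
  \|\mvec\|_1-\|\mvec'\|_1\le \ell+1 .
\]
To prove this I would fix $P\in\class(\mvec)$, a trail $T$ of length $\ell$ induced by $P$, and an $\ell$-switching on $P$ producing some $P'\in\class(\mvec')$. A $T$-switching alters pairs only at the vertices of $T$, and it uses exactly $\ell$ additional pairs (as many as the pairs of $T$ at its vertices, which sum to $\ell$), each belonging to a trail of $P$ other than $T$; so the trails of $P$ that contain an altered pair are $T$ together with at most $\ell$ others. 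Every trail of $P$ containing no altered pair is unchanged and is still a trail of $P'$, and conversely, and such a trail is short for $P$ if and only if it is short for $P'$. Hence $\|\mvec\|_1-\|\mvec'\|_1$ equals the number of short trails of $P$ among $T$ and the at most $\ell$ other altered trails, minus the number of short trails of $P'$ that contain an altered pair; the former is at most $\ell+1$ and the latter is at least $0$, which gives the bound. It is precisely this ``$+1$'' that the factor $e^{-(\ell+1)}$ above will absorb.

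Finally I would bound $\hat\alpha(D)$ along the two relevant path families. For any directed path $D=v_0\to v_1\to\dots\to v_r$ counted in $\mathcal D(Y,Z)$ or $\mathcal D(Y,Y)$, each of $v_0,\dots,v_{r-1}$ lies in $Y$ or is internal, so if $v_i\in\class_{m_i}$ then $m_i>M_0$; consequently $\hat\alpha(e_i)<e^{-(\ell_i+1)}$ along its $i$-th edge (of colour $\ell_i$), and $\hat\alpha(D)\le e^{-\sum_i(\ell_i+1)}$. For $D\in\mathcal D(Y,Y)$ this exponent is at least $4r\ge4$, so $\max_{\mathcal D(Y,Y)}\hat\alpha(D)\le e^{-4}<1/2$. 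For $D\in\mathcal D(Y,Z)$ the colour bound gives $\ell_i+1\ge m_i-m_{i+1}$, so the exponent is at least $\sum_i(m_i-m_{i+1})=m_0-m_r>M-M_0$ (since $m_0>M$ and $m_r\le M_0$), whence $\max_{\mathcal D(Y,Z)}\hat\alpha(D)\le e^{-(M-M_0)}$. Feeding both estimates into the conclusion of Theorem~\ref{switchings} yields
\[
  \abs Y=\sum_{\mvec\in Y}N(\mvec)\le\frac{e^{-(M-M_0)}}{1-e^{-4}}\sum_{\mvec\in Z}N(\mvec)\le 2e^{-M+M_0}\abs Z ,
\]
as required. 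I expect the colour bound of the second paragraph to be the only genuinely delicate point: one must carefully identify which trails of $P$ a $T$-switching disturbs and check that their short/long status can only be rearranged in the controlled way described. Everything else is bookkeeping built on \eqref{eq:alphahat}, Lemma~\ref{lemma:ckl}, and Theorem~\ref{switchings}.
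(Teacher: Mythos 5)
Your proof is correct and follows the same high-level route as the paper's: apply Theorem~\ref{switchings} with the setup already in place, verify $\hat\alpha(e)\le e^{-(\ell+1)}<1$ off~$Z$, bound $\hat\alpha(D)$ over $\mathcal D(Y,Y)$ and $\mathcal D(Y,Z)$ using the observation that an $\ell$-switching decreases $\|\mvec\|_1$ by at most $\ell+1$, and feed the two bounds into the conclusion. The one place where you genuinely deviate is the verification that $Z\ne\emptyset$. The paper uses a ``conservation of switchings'' contradiction: if every $\mvec$ satisfied $\hat\alpha<1$ on all outgoing edges, the total weight of switchings leaving vertices of $\varGamma$ would strictly exceed the weight entering, which is impossible since every switching both leaves and enters. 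You instead compute a first-moment bound $\E S_k(\Pvec)\le\sum_\ell c_{k,\ell}(d-L)^{-\ell}=O(n/d)=o(M_0)$ and invoke Markov. Both are valid; the paper's version is shorter and needs no probability estimate, while yours is more concrete and gives a quantitative statement (most partitions lie in $Z$, not merely at least one). You also spell out the justification of the ``$\ell+1$'' decrease bound, which the paper asserts without proof; your accounting (each $T$-switching introduces exactly $\ell$ new pairs, each lying in one trail of $P$ other than $T$, so at most $\ell+1$ short trails of $P$ are disturbed) is correct. Your final denominator $1-e^{-4}$ is sharper than the paper's $1/2$; both yield the stated factor of~$2$.
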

\begin{proof}
Let $e =(\mvec,\mvec')$ be an edge of $\varGamma$ with $\|\mvec\|_1> M_0$.
Combining Lemma \ref{lemma:ckl}
and the bound on 
$\hat\alpha(e)$ in
\eqref{eq:alphahat}, we obtain 
\begin{equation}\label{hatalpha}
    \hat\alpha(e) 
    \le
    \frac{2Cn L^2}{d M_0}
    e^{-(\ell+1)}
     = e^{-(\ell+1)}
     < \dfrac12,
\end{equation}
in view of the definition of $M_0$ in \eqref{eq:M0def}.
If $Z=\emptyset$ then the fact that $\alpha(\mvec,\mvec')<1$ whenever
$\mvec\notin Z$ would imply that the total number of switchings out of
vertices of $\varGamma$ exceeds the total number of switchings into those
vertices. This impossibility shows that $Z\ne\emptyset$.
Thus, all assumptions of Theorem \ref{switchings}  hold as explained in Section \ref{s:switchinglemma}.  

Recall the definition of $\mathcal D(W,W')$
from Theorem \ref{s:switchinglemma}.
Note that \eqref{hatalpha} implies  
\[
\max_{D\in\mathcal D(Y,Y)}\hat\alpha(D) \le \dfrac12,
\]
since any such directed path contains at least one edge with its starting vertex outside of~$Z$.
Next, consider any directed path $D\in \mathcal{D}(Y,Z)$.
Since any $\ell$-switching can decrease the number of short closed
trails by at most $\ell+1$, using \eqref{hatalpha}, we find that 
\[
    \hat\alpha(D) = 
    \prod_{e\in D}\hat\alpha(e) 
    \le 
    \exp\biggl(-\sum_{e\in D} (\ell_e+1)\biggr)
    \le e^{-M+M_0},
\]
where edge $e$ represents an $\ell_e$-switching.
Applying Theorem \ref{switchings}, we get the stated bound.
\end{proof}

Recall that $S_k(\Pvec)$ is the number of closed trails
with at most $k$ distinct vertices
induced by the random Eulerian partition $\Pvec$,
where $k$ is defined in~\eqref{eq:kDef}.
\begin{lemma}
\label{smalltrails}
Under the conditions of Theorem~\ref{T:main},
\[
  \E 2^{\frac{7}{5} S_k(\Pvec)} = e^{o(n)}.   
\]
\end{lemma}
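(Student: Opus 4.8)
The plan is to convert the tail bound of Lemma~\ref{L:YZ} into a bound on the moment generating function of $S_k(\Pvec)$ by the standard summation-by-parts argument. Recall that $S_k(\Pvec)=m$ exactly when $\Pvec\in\bigcup_{\mvec\in\class_m}\class(\mvec)$, so
\[
    \E 2^{\frac75 S_k(\Pvec)}
    = \frac{1}{\card{\calP(G)}}\sum_{m\ge 0} 2^{\frac75 m}\,
      \Bigl|\textstyle\bigcup_{\mvec\in\class_m}\class(\mvec)\Bigr|.
\]
First I would dispose of the contribution from ``few short trails''. Set $M=M_0+\beta n$ for a suitably small constant $\beta>0$; by Lemma~\ref{Xtail} (applied with the trivial bound $S_k\le X$), or simply because $S_k(\Pvec)$ counts closed trails of length at least~$3$ and so $S_k(\Pvec)\le \frac13\cdot nd/2$ deterministically, the partial sum over $m\le M$ is at most $2^{\frac75 M}=e^{O(n)}$ — but I want $e^{o(n)}$, so instead I would take $M = M_0 + \psi(n)$ where $\psi(n)=o(n)$ grows slowly; then the terms with $m\le M$ contribute at most $2^{\frac75 M}=2^{\frac75(M_0+\psi(n))}$. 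Since $M_0 = 2CnL^2/d$ and $L\le\log^4 d$ with $d\to\infty$, we have $M_0 = O(n\log^8 d/d)=o(n)$, so this part is $e^{o(n)}$ as required. (Here it is essential that $L^2/d\to0$, which holds because $L\le\log^4 d$.)

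Next I would handle the tail $m>M$. Apply Lemma~\ref{L:YZ} with the threshold parameter~$M$ replaced by a running value $m$: for each $m>M_0$, taking $Y=\bigcup_{m'>m}\class_{m'}$ in that lemma gives
\[
  \Bigl|\textstyle\bigcup_{m'>m}\class_{m'}\Bigr|
  \le 2e^{-m+M_0}\card{\calP(G)},
\]
hence $\P{S_k(\Pvec)>m}\le 2e^{-m+M_0}$ for all $m>M_0$. Then split the tail sum geometrically:
\[
  \sum_{m>M}2^{\frac75 m}\,\P{S_k(\Pvec)=m}
  \le \sum_{m>M} 2^{\frac75 m}\bigl(\P{S_k>m-1}-\P{S_k>m}\bigr)
  \le \sum_{m>M}2^{\frac75 m}\cdot 2e^{-(m-1)+M_0},
\]
and since $2^{7/5} e^{-1}=e^{\frac75\log 2 - 1}<1$ (because $\frac75\log 2\approx 0.970<1$), this geometric series converges and is dominated by its first term, of size $O\bigl(2^{\frac75 M}e^{-M+M_0}\bigr)=O\bigl(e^{(\frac75\log 2-1)\psi(n)}\bigr)=O(1)$. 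Combining the two ranges gives $\E 2^{\frac75 S_k(\Pvec)}\le e^{o(n)}+O(1)=e^{o(n)}$.

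The main obstacle — really the only delicate point — is making sure the constant $\frac75$ in the exponent is compatible with the decay rate $e^{-1}$ coming from the switching analysis: we need $\frac75\log 2<1$, which is true but tight-ish, and this is presumably why the H\"older split in~\eqref{eq:holder} uses the exponents $\tfrac72$ and $\tfrac75$ rather than, say, $2$ and $2$. A secondary point to get right is the bookkeeping that lets Lemma~\ref{L:YZ} be applied with a variable threshold $m$ rather than the single fixed $M$; this is immediate since the proof of that lemma works verbatim for any $m>M_0$ in place of $M$, but it should be stated explicitly. Everything else is the routine summation-by-parts estimate above, together with the observation $M_0=o(n)$.
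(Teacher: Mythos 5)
Your argument is essentially the same as the paper's: both derive $\Pr(S_k(\Pvec)>m)\le 2e^{-m+M_0}$ from Lemma~\ref{L:YZ} (treating the threshold as a free parameter, as the paper also does), both rely on $\lambda=\frac75\log 2<1$ to make the tail geometric series/integral converge, and both close with $M_0=2CnL^2/d=o(n)$ since $L=O(\log^4 d)$. The only cosmetic differences are that you use a discrete sum with summation by parts where the paper integrates the tail via $\E e^{\lambda S_k}=1+\lambda\int_0^\infty e^{\lambda x}\Pr(S_k>x)\,dx$, and you introduce an unnecessary intermediate threshold $M=M_0+\psi(n)$ where the paper splits directly at $M_0$. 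One small arithmetic slip: $2^{\frac75 M}e^{-M+M_0}=e^{\lambda M_0+(\lambda-1)\psi(n)}$, not $e^{(\lambda-1)\psi(n)}$, so the dominant term is $e^{\lambda M_0}=e^{o(n)}$ rather than $O(1)$; this does not affect the conclusion.
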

\begin{proof}
Recalling the definition of the set $Y$
before \eqref{eq:M0def},
using Lemma \ref{L:YZ}, we find that 
\[
    \Pr(S_k(\Pvec)>M) = \frac{\abs{Y}}{|\calP(G)|}
    \le \frac{\abs{Y}}{\abs{Z} } \le  2 e^{-M+M_0}.
\]
Let $\lambda = \frac75 \log 2  \approx 0.97$.
Then
\[ 
\E 2^{\frac75 S_k(\Pvec)} =
\E e^{\lambda S_k(\Pvec)}
=
\E\left[ 1 + \lambda \int_{0}^{\infty} e^{\lambda t}\,\mathbf{1}_{\{S_k(\Pvec)>t\}}\,dt \right]
= 1 + 
\lambda\int_{0}^\infty e^{\lambda x}\Pr(S_k(\Pvec)>x)\,dx,
\]
so we obtain that 
\[
   \E e^{ \lambda S_k(\Pvec)}
   \le 1 + \int_{0}^{M_0} \lambda  e^{\lambda x}\,dx
      + 2\int_{M_0}^\infty e^{\lambda x-(x-M_0)}\, dx 
     = e^{\lambda M_0} +\Dfrac{2e^{\lambda M_0}}{1-\lambda}.
\]
Recalling the definition of $M_0$ from \eqref{eq:M0def} 
and that $L = O(\log^4 d)$, 
we have $e^{\lambda M_0} = e^{o(n)}$. This completes the proof.
\end{proof}

\section{Acknowledgements}
This research was initiated at the  Combinatorics of McKay and Wormald workshop at the
Mathematical Research Institute MATRIX (June 2025).

\end{document}